\documentclass[11pt,a4paper,reqno]{amsart}
\usepackage{color}
\usepackage{amsfonts,amsmath,amssymb,amsxtra,url,float} 
\usepackage[colorlinks,linkcolor=RoyalBlue,anchorcolor=Periwinkle,citecolor=Orange,urlcolor=Green]{hyperref} 
\usepackage[usenames,dvipsnames]{xcolor} 
\usepackage{enumitem}
\setlength{\unitlength}{2.7pt}
\usepackage{geometry} \geometry{left=3cm,right=3cm, top=2.5cm, bottom=2.5cm} 
\usepackage{graphicx} 
\usepackage{subfigure} 
\usepackage{tikz}\usetikzlibrary{matrix}\usetikzlibrary{trees}
\usetikzlibrary{matrix}
\usetikzlibrary{patterns}
\usetikzlibrary{shadings}\usepgflibrary{shadings} 
\usepackage[all]{xy} 
\usepackage{mathdots} 
\usepackage{yhmath} %
\usepackage{dsfont} 
\usepackage{cite}
\usepackage{mathrsfs} 
\usepackage{multicol} 
\usepackage{changepage}
\numberwithin{figure}{section}

\usepackage{marginnote} 
   \setlength\marginparwidth{1.6cm}
   \setlength\marginparsep{2mm}
\usepackage{graphicx} 
\usepackage{multicol} 


\newtheorem{theorem}{Theorem}[section]
\newtheorem{lemma}[theorem]{Lemma}
\newtheorem{corollary}[theorem]{Corollary}
\newtheorem{main theorem}[theorem]{Main Theorem}
\newtheorem{proposition}[theorem]{Proposition}
\newtheorem{definition}[theorem]{Definition}
\newtheorem{construction}[theorem]{Construction}
\newtheorem{question}[theorem]{Question}
\newtheorem{remark}[theorem]{Remark}
\newtheorem{example}[theorem]{Example}
\newtheorem{notation}[theorem]{Notation}

\usetikzlibrary{arrows}

\numberwithin{equation}{section}





\def\<{\langle} 
\def\>{\rangle} 
\def\NN{\mathbb{N}} 
\def\ZZ{\mathbb{Z}} 


\newcommand{\Pic}{F{\tiny{IGURE}}\ }
\newcommand{\modcat}{\mathsf{mod}}

\newcommand{\ind}{\mathsf{ind}}
\newcommand{\kk}{\mathds{k}} 
\newcommand{\Q}{\mathcal{Q}} 
\newcommand{\I}{\mathcal{I}} 
\newcommand{\per}{\mathsf{per}} 
\newcommand{\Hom}{\mathrm{Hom}} %
\newcommand{\End}{\mathrm{End}} %
\newcommand{\grEnd}{\mathcal{E}nd} %
\renewcommand{\H}{\mathrm{H}} %
\newcommand{\SURF}{\mathbf{S}} 
\newcommand{\Surf}{\mathcal{S}} 
\newcommand{\bSurf}{\partial\mathcal{S}} 
\newcommand{\M}{\mathcal{M}} 
\newcommand{\MM}{\mathfrak{M}} 
\newcommand{\gbullet}{{\color{ForestGreen}\bullet}} 
\newcommand{\rbullet}{{\color{red}\circ}} 
\newcommand{\E}{\mathcal{E}} 
\newcommand{\Dgreen}{\Delta_{\color{ForestGreen}\bullet}} 
\newcommand{\Dred}{\Delta_{\color{red}\circ}} 
\newcommand{\tDgreen}{\widetilde{\Delta}_{{\color{ForestGreen}\bullet}}} 
\newcommand{\tDred}{\widetilde{\Delta}_{{\color{red}\circ}}} 
\newcommand{\PP}{\mathcal{P}} 
\newcommand{\innerSurf}{\mathcal{S}\backslash\partial\mathcal{S}} 
\newcommand{\tc}{\tilde{c}} 
\newcommand{\Y}{\mathcal{Y}} 
\newcommand{\F}{\mathcal{F}} 
\newcommand{\ii}{\mathfrak{i}} 
\newcommand{\PC}{\mathrm{PC}} 
\newcommand{\CC}{\mathrm{CC}} 
\newcommand{\AC}{\widetilde{\mathrm{AC}}} 
\newcommand{\A}{\mathbb{A}}
\newcommand{\tA}{\tilde{\mathbb{A}}}
\newcommand{\ta}{\tilde{a}}
\newcommand{\ared}[2]{\tilde{a}^{#1}_{\rbullet, #2}}

\newcommand{\agreen}[2]{a^{#1}_{\gbullet, #2}}

\newcommand{\m}{\mathfrak{m}}
\newcommand{\cemb}{\mathfrak{C}}
\newcommand{\X}{\mathfrak{X}}
\renewcommand{\top}{\mathrm{top}}
\newcommand{\soc}{\mathrm{soc}}
\newcommand{\simp}{\mathrm{simp}}
\newcommand{\proj}{\mathrm{proj}}
\newcommand{\longline}{-\!\!\!-\!\!\!-}

\newcommand{\bfP}{\pmb{P}}
\newcommand{\bfI}{\mathbf{I}}
\newcommand{\bfII}{\mathbf{II}}
\newcommand{\bfIII}{\mathbf{III}}
\newcommand{\Tri}{\mathrm{Tri}}
\newcommand{\stautilt}{\mathrm{s}\tau\text{-}\mathrm{tilt}}
\newcommand{\gldim}{\mathrm{gl.dim}}
\newcommand{\tGamma}{\widetilde{\Gamma}}

\def\defines{\it\color{black}}

\begin{document}

\title[There are no strictly shod algebras in hereditary gentle algebras]{There are no strictly shod algebras in hereditary gentle algebras}
\thanks{$^{\ast}$Corresponding author.}
\thanks{MSC2020: 05E10, 16G10, 16E45.}
\thanks{Key words: strictly shod algebra, graded marked surface, silted algebra, hereditary gentle algebra}
\author{Houjun Zhang}
\address{Houjun Zhang, School of Science, Nanjing University of Posts and Telecommunications, Nanjing 210023, P. R. China}
\email{zhanghoujun@njupt.edu.cn}
\author{Yu-Zhe Liu$^{\ast}$}
\address{Yu-Zhe Liu, School of Mathematics and statistics, Guizhou University, Guiyang 550025, P. R. China}
\email{yzliu3@163.com}





\begin{abstract}
We prove that there are no strictly shod algebras in hereditary gentle algebras by geometric models. As an application, we give a classification of the silted algebras for Dynkin type $\mathbb{A}_{n}$ and $\widetilde{\mathbb{A}}_{n}$.
\end{abstract}

\maketitle

\section{Introduction}

Silted algebras were introduced by Buan and Zhou \cite{BZ2016} as endomorphism algebras of 2-term silting complexes over finite dimensional hereditary algebras. They showed that any silted algebra is shod \cite{CL1999} (=small homological dimension, that is, the projective or the injective dimension is at most one). In particular, for a connected finite dimensional algebra $A$, they proved that $A$ is silted if and only if it is tilted or strictly shod algebras which are the shod algebras of global dimension three. 

Recall that in \cite[Theorem 1.1 (a)]{BZ2018}, Buan and Zhou showed that the global dimension of any silted algebra is less than or equal to three. The global dimension of any tilted algebra is at most two. Thus, we are interested in the strictly shod algebras. Recently, Xing gave some examples of silted algebras for the path algebras of some Dynkin quivers. She obtained that there are strictly shod algebras for the path algebras of Dynkin type $\mathbb{D}_{5}$. For example: 

\begin{example}\rm Let $Q$ be the following quiver
$$\begin{xy}
(-10,5)*+{1}="0",
(-10,-5)*+{2}="1",
(0,0)*+{3}="2",
(10,0)*+{4}="3",
(20,0)*+{5}="4",
\ar"0";"2",\ar"1";"2", \ar"2";"3", \ar"3";"4",
\end{xy}$$ and $D_{5}=\kk Q$. 
We take $S=P(5)\oplus P(4)\oplus P(2)\oplus I(2)\oplus P(2)[1]$, so $S$ is a 2-term silting complex over $D_{5}$. Then the quiver of $\End{(S)}$ is 
$$\begin{xy}
(20,0)*+{\circ}="5",
(10,0)*+{\circ}="4",
(0,0)*+{\circ}="3",
(-10,0)*+{\circ}="2",
(-20,0)*+{\circ}="1",
\ar"1";"2", \ar"2";"3", \ar"3";"4",\ar"4";"5",\ar@/^0.8pc/@{.}"1";"3",\ar@/^0.8pc/@{.}"2";"4",
\end{xy}.
$$ Thus the global dimension of $\End{(S)}$ is 3, that is it is a strictly shod algebra.

\end{example}

However, she does not found strictly shod algebras for the path algebras of Dynkin type $\mathbb{A}_{3,4}$. Inspired by the work of Xing, in \cite{XYZ2022}, Xie, Yang and the first author of this paper gave a complete classification of the silted algebras for the quiver $\overrightarrow{\mathbb{A}}_{n}$ of Dynkin type $\mathbb{A}_{n}$ with linearly orientation and the quiver obtained by reversing the arrow of quiver $\overrightarrow{\mathbb{A}}_{n}$ at the unique sink. We also obtained that there are no strictly shod algebras for this two Dynkin type $\mathbb{A}_{n}$. Thus, a natural question is:

\begin{question} \label{qus:question}
 Whether there are no strictly shod algebras in Dynkin type $\mathbb{A}_{n}$ with arbitrary orientation ?
\end{question} 

In this paper, we mainly study this question and give a positive answer. Note that any path algebra of Dynkin type $\mathbb{A}_{n}$ is a gentle algebra, which were introduced  by Assem and Skowro\'{n}ski \cite{AS1987} in 1980. Recently, the geometric models of gentle algebras have become an important research object and have been studied in representation theory by a number of authors, for example \cite{BCS2019, HKK2017, OPS2018,APS2019}.
In \cite{LZ2022}, we gave a geometric characterization of the silted algebras for the path algebra of Dynkin type $\mathbb{A}_{n}$ with linearly orientation. As application, we also gave a classification of the silted algebras for the path algebra of Dynkin type $\mathbb{A}_{n}$ with linearly orientation. However, this approach does not able to solve Question \ref{qus:question}. In order to answer Question \ref{qus:question}, we introduce 
a curve embedding from the geometric models of hereditary gentle algebras for module categories to derived categories. Notice that the hereditary gentle algebras in this paper are the path algebras of quivers of type $\mathbb{A}_{n}$ and $\tA_{n}$.
Let $A$ be a hereditary gentle algebra and $\SURF^{\F_A}(A)$ be the marked ribbon surface of $A$. Then, we have

\begin{theorem}[Theorem \ref{thm:curve embeding}] \label{thm1}
There is an injection
\[\cemb: \PC(\SURF^{\F_A}(A)) \to \AC(\SURF^{\F_A}(A)),\ c\mapsto \tc^{\circlearrowleft}\]
from the set of all equivalent classes of permissible curves in $\SURF^{\F_A}(A)$
to the set of all equivalent classes of admissible curves such that the indecomposable complex corresponding to $\tc^{\circlearrowleft}$ is quasi-isomorphic to the minimal projective representation of any string module corresponding to $c$.
\end{theorem}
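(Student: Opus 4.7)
The plan is to construct $\cemb$ explicitly, verify it is well-defined on equivalence classes and compatible with the projective resolution, then deduce injectivity from a reconstruction argument. Given a permissible curve $c$ in $\SURF^{\F_A}(A)$ representing the string module $M(c)$ over the hereditary gentle algebra $A$, I first write down the minimal projective resolution
\[0 \to P_1(c) \to P_0(c) \to M(c) \to 0.\]
By the standard description of projective covers and syzygies of string modules over gentle algebras, $P_0(c)$ is a direct sum of indecomposable projectives indexed by the peaks of $c$, and $P_1(c)$ indexed by the valleys. Under the geometric dictionary for the marked ribbon surface, each such indecomposable projective corresponds to a specific green/dual arc. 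The candidate $\tc^{\circlearrowleft}$ is the admissible curve obtained by concatenating these arcs in the cyclic order dictated by the string combinatorics of $c$, with the orientation recorded by the superscript $\circlearrowleft$. Well-definedness on equivalence classes follows because homotopic permissible curves produce identical peak/valley data, hence identical concatenations up to admissible equivalence.

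Next, I would prove that $\tc^{\circlearrowleft}$ is admissible and that the indecomposable complex associated to it is quasi-isomorphic to the two-term complex $P_1(c) \to P_0(c)$. Concretely, at each crossing with the green/dual arcs one checks that the intersection data of $\tc^{\circlearrowleft}$ recovers the summands of $P_0(c)$ and $P_1(c)$ in the correct cohomological degrees, and that the local homotopy class of $\tc^{\circlearrowleft}$ between consecutive crossings reproduces the differential. This is the main obstacle. The combinatorics encoded by $\circlearrowleft$ must be chosen so that the resulting curve lies in $\AC(\SURF^{\F_A}(A))$, so that the orientation of each boundary-segment piece is compatible with the direction of $P_1(c) \to P_0(c)$, and so that no spurious crossings with the dual arcs are introduced that would alter the degree count. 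The analysis is local around each peak and valley of $c$, and the \emph{minimality} of the projective resolution is what guarantees that no cancellable intersections appear, so the output is a reduced admissible curve.

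Finally, injectivity is nearly immediate. If $\cemb(c) \sim \cemb(c')$ in $\AC(\SURF^{\F_A}(A))$, then the associated indecomposable complexes are isomorphic in $\per(A)$, so their zeroth cohomologies agree by the previous step, giving $M(c) \cong M(c')$; the permissible-curve/string-module correspondence then forces $c \sim c'$ in $\PC(\SURF^{\F_A}(A))$.
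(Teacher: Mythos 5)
Your overall skeleton matches the paper's: first identify the minimal projective resolution $0\to P_1(c)\to P_0(c)\to \MM(c)\to 0$ combinatorially (the paper's Lemma \ref{lemm:proj cover} and Corollary \ref{coro:proj resolution}, where your ``peaks'' and ``valleys'' are the top and socle $\gbullet$-arcs together with the two boundary terms $\overleftarrow{\agreen{c}{1}}$ and $\overleftarrow{\agreen{c}{m(c)}}$), then produce an admissible curve realizing this two-term complex, and finally deduce injectivity by applying $\H^0$ and the bijections $\X$ and $\MM$. The injectivity argument at the end is correct and is essentially the only way to conclude.

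However, there is a genuine gap in the middle, and it is exactly where you say ``this is the main obstacle.'' Your definition of $\tc^{\circlearrowleft}$ as ``the admissible curve obtained by concatenating these arcs in the cyclic order dictated by the string combinatorics of $c$'' is not a well-defined curve in the surface: in the Opper--Plamondon--Schroll model the curve representing a string complex is not a concatenation of $\gbullet$- or $\rbullet$-arcs, but a curve transverse to the dual dissection $\Dred$ whose ordered crossings with graded $\rbullet$-arcs, together with the intersection indices, encode the summands of $P_0$ and $P_1$ in degrees $0$ and $-1$ and the differential. The paper instead defines $c^{\circlearrowleft}$ by an explicit local move: classify the two end segments $c_{(0,1)}$ and $c_{(m(c),m(c)+1)}$ into Cases $\bfI$, $\bfII$, $\bfIII$ (Figure \ref{fig:endsegment}) and slide the endpoint to the next $\gbullet$-marked point in the positive boundary direction precisely in Cases $\bfII$ and $\bfIII$, keeping every interior segment fixed; Proposition \ref{prop:curve change} then verifies case by case that the resulting curve crosses exactly the $\rbullet$-arcs dual to the summands of $P_0(c)\oplus P_1(c)$ with the right indices. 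That case analysis is the actual content of the theorem and is absent from your proposal. Moreover, the feature that $c^{\circlearrowleft}$ coincides with $c$ away from its endpoints is used essentially later (Proposition \ref{prop:FFAS} and Theorem \ref{main thm}, where triangulations are turned into $\gbullet$-FFASs and the elementary polygons are counted), so a construction that only produces \emph{some} admissible curve with the right image under $\X$ would not suffice for the paper's purposes even if made precise.
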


It is known that a full formal arc system of the marked surface of $A$ divides the marked surface of $A$ into some elementary polygons such that there is exactly one edge in the boundary of the marked surface of $A$. In \cite[Theorem 5.10]{LGH2022}, the second author of this paper and his coauthors proved that the global dimension of gentle algebras can be calculated by the edges of the elementary polygons. Then by Theorem \ref{thm1} and the characterization of the silted algebras of gentle algebras in \cite{LZ2022}, we have

\begin{theorem}[Theorem \ref{main thm}] \label{thm2}
There are no strictly shod algebras in hereditary gentle algebras.
\end{theorem}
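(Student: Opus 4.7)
The plan is to argue by contradiction. Suppose $A$ is a hereditary gentle algebra and $B = \End(S)$ is a silted algebra over $A$ that is strictly shod, so that $\gldim B = 3$. Since every hereditary gentle algebra is a path algebra of a quiver of type $\mathbb{A}_n$ or $\tA_n$, the marked ribbon surface $\SURF^{\F_A}(A)$ is either a disc or an annulus, with all marked points lying on the boundary and of a single colour. By the characterization of silted gentle algebras in \cite{LZ2022}, $B$ is itself a gentle algebra, and its marked ribbon surface $\SURF^{\F_B}(B)$ is built from $\SURF^{\F_A}(A)$ together with an admissible dissection read off from $S$.

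To produce this dissection concretely I would invoke Theorem \ref{thm1}. Each indecomposable summand of the 2-term silting complex $S$ is a two-term complex of indecomposable projective $A$-modules, and since $A$ is hereditary it is quasi-isomorphic to the minimal projective resolution of a uniquely determined string $A$-module. Applying the curve embedding $\cemb$ to each such string module yields a family of admissible curves $\tc^{\circlearrowleft}$, which together with the boundary arcs forms a full admissible dissection of $\SURF^{\F_A}(A)$. This dissection cuts the surface into elementary polygons, each containing a unique boundary segment of $\Surf$, and these polygons encode the relations of $B$ as a gentle algebra.

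To finish, I would apply \cite[Theorem 5.10]{LGH2022}, which expresses $\gldim B$ as the maximum, over the elementary polygons of $\SURF^{\F_B}(B)$, of an explicit combinatorial invariant counting interior edges. It therefore suffices to show that no elementary polygon produced by $\cemb$ realises this invariant equal to $3$; once this is ruled out, $\gldim B\le 2$ and $B$ cannot be strictly shod. The combinatorial control required here is where I expect the main obstacle to lie: the disc/annulus geometry of $\SURF^{\F_A}(A)$, together with the orientation-sensitive construction $c \mapsto \tc^{\circlearrowleft}$, should prevent any elementary polygon from hosting a chain of three consecutive interior edges. For the disc case ($\mathbb{A}_n$) this should follow from a bounded-complexity argument on the corners of the polygon; for the annulus case ($\tA_n$) one additionally has to rule out elementary polygons that wind nontrivially around the hole, which I would handle by tracking how $\cemb$ interacts with the two boundary components and the orientations of the curves $\tc^{\circlearrowleft}$. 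Once this bound is established, we obtain $\gldim B \le 2$, contradicting the strict shod assumption and proving Theorem \ref{main thm}.
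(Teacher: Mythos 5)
Your overall strategy coincides with the paper's: pass to the geometric model, turn the $2$-term silting complex $S$ into an arc system on $\SURF^{\F_A}(A)$ via the curve embedding, and then bound $\gldim B$ by the edge-count formula of \cite[Theorem 5.10]{LGH2022}. However, there is a genuine gap at exactly the step you yourself flag as ``the main obstacle'': you never prove that no elementary polygon of the resulting dissection has enough arc edges to force global dimension $3$, and the heuristics you offer (a ``bounded-complexity argument on the corners'', ``tracking how $\cemb$ interacts with the two boundary components'') do not supply the missing argument. The reason the bound holds is not the disc/annulus geometry by itself. The paper's key idea is that $\Gamma^{\circlearrowleft}$ is the image, under the endpoint-rotation $c\mapsto c^{\circlearrowleft}$, of a \emph{triangulation} $\Gamma$ of $\SURF^{\F_A}(A)$ corresponding to the support $\tau$-tilting module $\mathrm{H}^0(S)$ (via the bijection of \cite{HZZ2020} recorded in Proposition \ref{prop:FFAS}). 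A case analysis at each marked point then shows that every triangle of $\Gamma$ (three edges) becomes an elementary $\gbullet$-polygon of $\Gamma^{\circlearrowleft}$ with at most four edges, at most three of which are arcs, whence $\gldim B\le \max_i\mathfrak{C}(\Delta_i)-1\le 3-1=2$. Without routing the argument through a triangulation, the combinatorial control you need is simply not available, and this is where the real content of the proof lies.

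A secondary inaccuracy: it is not true that every indecomposable summand of $S$ is quasi-isomorphic to the minimal projective resolution of a string module. The summands of the form $P[1]$ (those with $\MM(c_i)=0$, corresponding to the ``support'' part of the support $\tau$-tilting module) are not of this form, and the paper treats them separately in Proposition \ref{prop:FFAS}(2) by keeping the original $\gbullet$-arc $c_i$ unrotated. Your dissection therefore has to mix rotated and unrotated curves, and verifying that this mixed family has no interior intersections — hence is a genuine full formal arc system — is itself part of the work, carried out in the paper via a Hom-vanishing argument for silting objects.
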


As a consequence, by Theorem \ref{thm2}, we have the following two corollaries.

\begin{corollary}
Let $A$ be the path algebra of Dynkin type $\mathbb{A}_{n}$ with arbitrary orientation. Then the silted algebras of $A$ have two forms:
\begin{itemize}
\item[{\rm(1)}] the tilted algebras of type $\mathbb{A}_{n}$;

\item[{\rm(2)}] the direct product of some tilted algebras of type $\mathbb{A}_{m_{1}},\ldots,\mathbb{A}_{{m}_{k}}$ such that $m_{1}+\cdots+m_{k}=n$ $(k\ge 2)$.
\end{itemize}
\end{corollary}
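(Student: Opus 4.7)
The plan is to combine the Buan--Zhou dichotomy \cite{BZ2016} with Theorem \ref{thm2}, and then reduce the non-connected case to a product of connected silted algebras via the geometric model supplied by Theorem \ref{thm1}.

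For the connected case, I would argue as follows. By \cite{BZ2016}, every connected silted algebra is either tilted or strictly shod. Since $A = \kk Q$ for $Q$ of type $\mathbb{A}_n$ is a hereditary gentle algebra, Theorem \ref{thm2} rules out the strictly shod option, so every connected silted algebra of $A$ is tilted. Because tilting preserves the derived equivalence class of the hereditary algebra, it is a tilted algebra of type $\mathbb{A}_n$, which is case (1).

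Next, suppose $B = \End_{\Dcat^b(A)}(S)$ is non-connected and write $B = B_1 \times \cdots \times B_k$ with $k \geq 2$ and each $B_i$ connected. This corresponds to an orthogonal decomposition $S = S_1 \oplus \cdots \oplus S_k$ in $\Dcat^b(A)$ satisfying $\Hom(S_i, S_j[\ell]) = 0$ for all $i \neq j$ and all $\ell \in \ZZ$. Translating through Theorem \ref{thm1}, each $S_i$ corresponds to a family of pairwise-compatible permissible curves on the marked disc of $A$, and the Hom-orthogonality above forces geometric disjointness of the families associated with different indices. These disjoint families partition the marked disc of type $\mathbb{A}_n$ into sub-discs, each of which is the marked disc of a hereditary algebra $\kk Q_i$ of type $\mathbb{A}_{m_i}$ with $m_1 + \cdots + m_k = n$. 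Each $B_i$ is then a connected silted algebra of $\kk Q_i$, so the connected case above gives that $B_i$ is a tilted algebra of type $\mathbb{A}_{m_i}$, yielding case (2).

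The main technical difficulty lies in the second step: rigorously translating the block decomposition of $B$ into a geometric separation of the marked disc. This requires derived-orthogonality in all degrees (which follows from the silting property combined with the heredity of $A$) and a careful argument that pairwise-disjoint families of permissible curves on the marked disc of type $\mathbb{A}_n$ must be separated by boundary arcs into sub-discs whose associated path algebras are of smaller type $\mathbb{A}$. The other steps, in contrast, are essentially formal consequences of the two main theorems of the paper.
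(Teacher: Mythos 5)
Your connected case is exactly the derivation the paper intends (it states the corollary as an immediate consequence of Theorem \ref{thm2} together with the Buan--Zhou dichotomy and gives no further proof), so that half is fine, modulo the fact that ``tilted of type $\mathbb{A}_n$ rather than of some other type with $n$ vertices'' deserves a word --- it follows from the finer structure results in \cite{BZ2016} or from the derived equivalence once one knows $S$ is tilting, not merely from ``tilting preserves the derived equivalence class.''

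The disconnected case, however, contains a genuine gap, and it sits precisely at the step you flag as the main technical difficulty. You claim that the block decomposition $B = B_1\times\cdots\times B_k$ corresponds to a decomposition $S = S_1\oplus\cdots\oplus S_k$ with $\Hom(S_i,S_j[\ell])=0$ for \emph{all} $\ell\in\ZZ$, asserting that this ``follows from the silting property combined with the heredity of $A$.'' It does not: silting gives vanishing for $\ell>0$ and the block structure gives vanishing for $\ell=0$, but nothing controls $\ell<0$. Already for $A=\kk\mathbb{A}_2$ (quiver $1\to 2$) and the 2-term silting complex $S = P_2\oplus P_1[1]$ one has $\End_{\per A}(S)\cong\kk\times\kk$, yet $\Hom_{\per A}(P_2, P_1[1][-1]) = \Hom_A(P_2,P_1)\neq 0$. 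Consequently the geometric disjointness you want to deduce is also false: in this example the two arcs of the corresponding $\gbullet$-FFAS share an endpoint and bound a common elementary polygon; the disc is \emph{not} separated into two disjoint sub-discs. What actually separates the blocks is not non-intersection of the curves but the grading: by Proposition \ref{prop:FFAS} the arcs of $\Gamma^{\circlearrowleft}$ are pairwise non-crossing in the interior no matter how $B$ decomposes, $\grEnd_{\per A}(S)$ is the graded gentle algebra of this FFAS with a \emph{connected} quiver, and $B=\H^0$ is obtained by deleting the arrows of nonzero degree; the blocks $B_i$ are the connected components that survive this deletion, and their arcs may still meet at marked points. One can then argue that each surviving component spans a sub-collection of arcs whose regular neighbourhood is a disc with $m_i+1$ marked points, and feed that back into the connected case --- but that is a different (grading-based) argument from the intersection-theoretic one you propose, and the intersection-theoretic one, as stated, fails at its first step.
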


\begin{corollary}
Let $A$ be the path algebra of type $\tA_{n}$ with arbitrary orientation. Then the silted algebras of $A$ have four forms:
\begin{itemize}
\item[{\rm(1)}] the tilted algebras of type $\tA_n$;

\item[{\rm(2)}] the tilted algebras of type $\A_n$;

\item[{\rm(3)}] the direct product of some tilted algebras of type $\A_{m_1},\ldots,\A_{m_k}$ such that $m_{1}+\cdots+m_{k}=n$ $(k\ge 2)$;

\item[{\rm(4)}] the direct product of some tilted algebras of type $\A_{m_{1}},\ldots,\A_{{m}_{k}}, \tA_{\tilde{m}}$ such that $m_{1}+\cdots+m_{k} + \tilde{m}=n$ $(k\ge 1)$.
\end{itemize}
\end{corollary}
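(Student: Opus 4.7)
The plan is to argue by contradiction. Suppose $A$ is a hereditary gentle algebra and $B = \End(S)$ is a silted algebra arising from a 2-term silting complex $S$ over $A$ that is strictly shod, so that $\gldim B = 3$. First I would use the geometric classification of silted gentle algebras from \cite{LZ2022} to realise $S$ as a collection of admissible curves dissecting the marked ribbon surface $\SURF^{\F_A}(A)$, which in the hereditary case is a disk (for type $\A_n$) or an annulus (for type $\tA_n$) with a prescribed marked-point and foliation structure. Theorem \ref{thm:curve embeding} then provides the embedding $\cemb \colon \PC(\SURF^{\F_A}(A)) \to \AC(\SURF^{\F_A}(A))$, under which each summand of $S$ corresponds to a closed admissible curve $\tc^{\circlearrowleft}$ quasi-isomorphic to the minimal projective representation of the associated string module; hence $B$ is described geometrically by the full formal arc system $\cemb(\Gamma_S)$ on $\SURF^{\F_A}(A)$.

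Once $B$ is realised geometrically, I would invoke \cite[Theorem 5.10]{LGH2022}, which reads off $\gldim B$ from the elementary polygons determined by $\cemb(\Gamma_S)$: the global dimension equals the maximum, taken over all elementary polygons, of the number of consecutive internal edges not interrupted by a boundary edge of the surface. The assumption $\gldim B = 3$ would then produce an elementary polygon $P$ containing at least three consecutive internal edges, and the whole proof reduces to showing that no such polygon can occur.

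The main obstacle, and where the technical effort will concentrate, is a careful analysis of the elementary polygons that can arise in a dissection of the form $\cemb(\Gamma_S)$ when $A$ is hereditary and $S$ is 2-term. The 2-term restriction controls the endpoints and the winding of each admissible curve $\tc^{\circlearrowleft}$, which in turn forces every elementary polygon to meet the boundary of the disk or annulus often enough to break any chain of three consecutive internal edges. For type $\A_n$ this is a direct enumeration of the possible polygon shapes inside the disk; for $\tA_n$ one must additionally control polygons that wrap around the hole of the annulus, and I expect this annular case to be the most delicate step, since curves lifted from permissible curves via $\cemb$ may contribute non-trivial winding. Once the combinatorial bound is in place, \cite[Theorem 5.10]{LGH2022} gives $\gldim B \le 2$, so $B$ must be tilted rather than strictly shod, yielding the required contradiction.
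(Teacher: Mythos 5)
There is a genuine gap: your proposal proves the wrong statement. What you sketch is the paper's Theorem \ref{main thm} (no silted algebra of a hereditary gentle algebra is strictly shod), and for that ingredient your strategy does match the paper's: push the triangulation $\Gamma$ corresponding to $\mathrm{H}^0(S)$ through $\cemb$ to get an $\gbullet$-FFAS $\Gamma^{\circlearrowleft}$, bound the number of edges of the resulting elementary polygons, and apply the global dimension formula of \cite{LGH2022}. But the corollary you were asked to prove is a classification with four explicitly listed forms. Ruling out strictly shod algebras only tells you, via the Buan--Zhou dichotomy, that every connected block of a silted algebra of $A$ is a tilted algebra of \emph{some} type; it says nothing about which types occur. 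Your proposal never explains why the tilted factors must be of types $\A_{m_1},\ldots,\A_{m_k}$ with at most one additional factor of type $\tA_{\tilde m}$, why the ranks sum to $n$, or why a tilted algebra of, say, type $\mathbb{D}$ cannot appear as a block. Ending the argument at ``$\gldim B \le 2$, so $B$ is tilted rather than strictly shod'' leaves the entire content of items (1)--(4) unproved.

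The paper closes this gap by observing that, since $\tGamma^{\circlearrowleft}$ is an $\gbullet$-FFAS of the annulus $\SURF^{\F}(\kk\tA_n)$, the dg endomorphism algebra $\grEnd_{\per A}\X(\tGamma^{\circlearrowleft})$ is a dg one-cycle gentle algebra, and then invoking the derived invariants of Avella-Alaminos and Gei\ss{} \cite[Section 7]{AG2008} together with \cite[Theorem 1.1 (a)]{BZ2018} to pin down the possible types of the tilted blocks. You would need this (or an equivalent argument controlling the derived equivalence classes of the blocks) to obtain the four forms. A further minor slip: the summands of a 2-term silting complex correspond under $\X$ to admissible curves \emph{with} endpoints in $\M$, not to closed curves; curves without endpoints parametrize band objects, which do not arise here.
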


This paper is organized as follows. In Section 2 we recall some preliminaries on the geometric models of gentle algebras. In Section 3 we study the curve embedding from the geometric models of gentle algebras for module categories to derived categories. In the last section, we prove Theorem \ref{thm2}.

Throughout this paper, we always assume that $\kk$ is an algebraically closed field and any surface $\Surf$ is smooth and its boundary $\bSurf$ is non-empty.

\section{Hereditary gentle algebras and their geometric models}

In this section, we recall some notations and concepts for geometric models of hereditary gentle algebras, we refer the reader to  \cite{HKK2017, OPS2018, BCS2019, APS2019, AS1987} for more details.

\subsection{Graded marked ribbon surfaces} \label{subsect:geo.mod.}
A {\defines marked surface} is a triple $(\Surf, \M, \Y)$, where $\M$ and $\Y$ are finite subsets of the boundary $\bSurf$ such that elements in $\M$ and $\Y$ are alternative in every boundary component. Elements in $\M$ and $\Y$ are called {\defines open marked points} (=$\gbullet$-marked points) and {\defines closed marked points} (=$\rbullet$-marked points), respectively.

An {\defines open curve} (=$\gbullet$-curve) (resp., a {\defines closed curve} (=$\rbullet$-curve))
is a curve in $\Surf$ whose endpoints are $\gbullet$-marked points (resp., $\rbullet$-marked points).
Especially, we always assume that any two curves in $\Surf$ are representatives in their homotopy classes such that their intersections are minimal.

A {\defines full formal open arc system} (=$\gbullet$-FFAS) (resp., {\defines full formal closed arc system} (=$\rbullet$-FFAS)) of $(\Surf, \M, \Y)$, say $\Dgreen$ (resp., $\Dred$),
is a set of some $\gbullet$-curves (resp., $\rbullet$-curves) such that:
\begin{itemize}
  \item any two $\gbullet$-curves (resp., $\rbullet$-curves) in $\Dgreen$ (resp., $\Dred$) has no intersection in $\innerSurf$;
  \item every {\defines elementary $\gbullet$-polygon} (resp., {\defines elementary $\rbullet$-polygon}), the polygon obtained by $\Dgreen$ (resp., $\Dred$) cutting $\Surf$, has a unique edge in a subset of $\bSurf$.
\end{itemize}
The elements of $\Dgreen$ (resp., $\Dred$) are called {\defines $\gbullet$-arcs} (resp., {\defines $\rbullet$-arcs}).

Next we introduce the definition of the marked ribbon surfaces of hereditary gentle algebras.

\begin{definition} \rm
A {\defines marked ribbon surface} $\SURF=(\Surf, \M, \Y, \Dgreen, \Dred)$ is a marked surface $(\Surf, \M, \Y)$ with $\gbullet$-FFAS $\Dgreen$ and $\rbullet$-FFAS $\Dred$ such that the following conditions hold.
\begin{itemize}
  \item $\Dgreen$ is a {\defines triangulation} of $\Surf$, i.e.,
    any elementary $\gbullet$-polygon is a digon or triangle.
  \item $\Dred$ is the {\defines dual dissection} of $\SURF$ decided by $\Dgreen$,
  i.e., for every $\rbullet$-curve $a_{\rbullet}$ (resp., $\gbullet$-curve $a_{\gbullet}$), there is a unique $\gbullet$-curve $a_{\gbullet}$ (resp., $\rbullet$-curve $a_{\rbullet}$) intersecting with $a_{\rbullet}$ (resp., $a_{\gbullet}$)
  and $a_{\gbullet}$ and $a_{\rbullet}$ has only one intersection.
\end{itemize}

A {\defines graded surface} $\Surf^{\F}=(\Surf, \F)$ is a surface $\Surf$ with a section $\F$ of the projectivized tangent bundle $\mathbb{P}(T\Surf)$. $\F$ is called a {\defines foliation} (or {\defines grading}) on $\Surf$.

A {\defines graded curve} in a graded surface $(\Surf, \F)$ is a pair $(c, \tc)$ of curve $c: [0,1] \to \Surf$ and {\defines grading} $\tc$,
where $\tc$ is a homotopy class of paths in $\mathbb{P}(T_{c(t)}\Surf)$ from the subspace given by $\F$ to the tangent space of the curve, varying continuously with $t\in [0,1]$.
For simplicity, we denoted by $c$ and $\tc$ the curve $c:[0,1]\to \Surf$ and the graded curve $(c, \tc)$, respectively.
Let $\tc_1$ and $\tc_2$ be two graded curves and $p$ be an intersection of $c_1$ and $c_2$.
The {\defines intersection index} $\ii_p(\tc_1, \tc_2)$ of $p$ is an integer given by
\[\ii_p(\tc_1, \tc_2) := \tc_1(t_1) \cdot \kappa_{12} \cdot \tc_2(t_2)^{-1} \in \pi_1(\mathbb{P}(T_p\Surf)) \cong \mathbb{Z}, \]
where
\begin{itemize}
  \item $\tc_i(t_i)$ is the homotopy classes of paths from $\F(p)$ to the tangent space $\dot{c}_i(t_i)$;
  \item $\kappa_{12}$ is the paths from $\dot{c}_1(t_1)$ to $\dot{c}_2(t_2)$ given by clockwise rotation in $T_p\Surf$ by an angle $<\pi$;
  \item $\pi_1(\mathbb{P}(T_p\Surf))$ is the fundamental group defined on the projectivized of the tangent space $T_p\Surf$.
\end{itemize}

A {\defines graded full formal open arc system} (=$\gbullet$-grFFAS) (resp., {\defines graded full formal closed arc system} (=$\rbullet$-grFFAS)) is an $\gbullet$-FFAS (resp., $\rbullet$-FFAS) whose elements are graded $\gbullet$-curves (resp., $\rbullet$-curves).

A {\defines graded marked ribbon surface} $\SURF^{\F}=(\Surf^{\F}, \M, \Y, \tDgreen, \tDred)$ is a marked ribbon surface with a foliation $\F$
such that $\gbullet$-FFAS and $\rbullet$-FFAS are graded and $\ii_{a_{\gbullet}\cap a_{\rbullet}}(\ta_{\gbullet}, \ta_{\rbullet})=0$.
\end{definition}

\begin{notation} \rm
We say that $a_{\gbullet}$ (resp., $a_{\rbullet}$) is the {\defines dual arc} of $a_{\rbullet}$ (resp., $a_{\gbullet}$)
and all $\rbullet$-marked points lying in digon are {\defines extra marked points}.
Denote by $\E$ the set of all extra marked points of $\SURF^{\F}$.
\end{notation}

Any graded marked ribbon surface $\SURF^{\F}$ induces a graded algebra as follows.

\begin{construction} \label{construction} \rm
The graded algebra $A(\SURF^{\F})$ of $\SURF^{\F}$ is a finite dimensional algebra $\kk\Q$ with grading $|\cdot|: \Q \to \ZZ$ given by the following steps.
\begin{itemize}
  \item[Step 1]
    there is a bijection $\mathfrak{v}: \Dgreen \to \Q_0$, i.e., $\Q_0 = \Dgreen$;
  \item[Step 2]
    any elementary $\gbullet$-polygon $\PP$ given by $\Dgreen$ provides an arrow $\alpha: \mathfrak{v}(a^1_{\gbullet}) \to \mathfrak{v}(a^2_{\gbullet})$,
    where $a^1_{\gbullet}, a^2_{\gbullet}\in \Dgreen$ are two edges of $\PP$ with common endpoints $p\in \M$
    and $a^2_{\gbullet}$ is left to $a^1_{\gbullet}$ at the point $p$;
  \item[Step 3]
    the grading $|\alpha|$ of $\alpha$ equals to the intersection index $1-\ii_{a^1_{\rbullet}\cap a^2_{\rbullet}}(\ta^1_{\rbullet}, \ta^2_{\rbullet})$.
\end{itemize}
\end{construction}

In order to describe the indecomposable modules of gentle algebras, Baur and Coelho-Sim\~{o}es \cite{BCS2019} defined permissive curves as follows.

\begin{definition}\rm
A {\defines permissible curve} $c$ is a function $c: [0,1] \to \Surf$ such that the following conditions hold.
\begin{itemize}
    \item[(1)] The endpoints of $c$ are elements belong to $\M\cup\E$, or $c(0)=c(1)\in\innerSurf$;
    \item[(2)] If $c$ {\defines consecutively crosses} two $\gbullet$-arcs $a_{\gbullet}^1$ and $a_{\gbullet}^2$,
     that is, the segment $c_{1,2}$ of $c$ obtained by $a_{\gbullet}^1$ and $a_{\gbullet}^2$ cutting $c$ dose not cross other $\gbullet$-arcs, then $a_{\gbullet}^1$ and $a_{\gbullet}^2$ have a common endpoint lying in $\M$.
\end{itemize}
\end{definition}

Furthermore, Opper, Plamondon and Schroll \cite{OPS2018} used the admissible curves to characterize the indecomposable objects in derived categories.

\begin{definition}\rm
An {\defines admissible curve} $\tc$ is a graded $\gbullet$-curve or a graded curve without endpoints.
\end{definition}

\subsection{Hereditary gentle algebras} \label{subsec:hga}

\begin{definition}\rm
Let $\Q$ be a finite quiver and $I$ an admissible ideal of the path algebra $\kk\Q$. Then algebra $A=\kk\Q/I$ is called a {\it gentle algebra} provided the following conditions are satisfied:
\begin{itemize}
  \item[(G1)] Each vertex of $\Q$ is the source of at most two arrows and the target of at most two arrows.

  \item[(G2)] For each arrow $\alpha:x\to y$ in $\Q$, there is at most one arrow $\beta$
  whose source (resp., target) is $y$ (resp., $x$) such that $\alpha\beta\in\I$ (resp., $\beta\alpha\in\I$).

  \item[(G3)] For each arrow $\alpha:x\to y$ in $\Q$, there is at most one arrow $\beta$
  whose source (resp., target) is $y$ (resp., $x$) such that $\alpha\beta\notin\I$ (resp., $\beta\alpha\notin\I$).

  \item[(G4)] $\I$ are generated by paths of length $2$.
\end{itemize}
\end{definition}

\begin{remark} \label{rem:hereditary-gentle-algebras}\rm
A {\defines hereditary gentle algebra} is a finite dimensional path algebra which is isomorphic to $\kk\Q$ such that the quiver $\Q$ is either of type $\A_n$ or of type $\tA_n$.
\end{remark}

In the sequel, we always assume that $A$ is a hereditary gentle algebra. Then there is a bijection between the set of isoclasses of hereditary gentle algebras and the set of homotopy classes of marked ribbon surfaces. Then we have a marked ribbon surface of $A$ and denote it by $\SURF(A)=(\Surf(A), \M(A), \Y(A), \Dgreen(A), \Dred(A))$. The following result is well-known.

\begin{proposition} \label{prop:disk-and-annulus}
The marked surface $\Surf(A)$ is either a disk or an annulus.
\end{proposition}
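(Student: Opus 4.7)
The plan is to prove the proposition by case analysis based on Remark~\ref{rem:hereditary-gentle-algebras}, which tells us that the quiver $Q$ of $A$ is either of type $\A_n$ or of type $\tA_n$. In each case I would reconstruct $\SURF(A)$ explicitly by running the bijection between hereditary gentle algebras and marked ribbon surfaces in the reverse direction of Construction~\ref{construction}, and then read off the topology of $\Surf(A)$ from the resulting polygon decomposition.

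For $Q$ of type $\A_n$, the underlying graph of $Q$ is a path with $n$ vertices and $n-1$ edges, hence a tree. The associated $n$ $\gbullet$-arcs, glued along the elementary triangles and boundary digons prescribed by the construction, produce a contractible planar region with non-empty boundary; an Euler characteristic computation ($V-E+F=1$ after including the boundary digons at the two leaves of $Q$) confirms that $\Surf(A)$ is a disk. The absence of relations (since $A$ is hereditary) guarantees that no ``forbidden threads'' cut the disk into a more complicated surface.

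For $Q$ of type $\tA_n$, the underlying graph is an $n$-cycle, so the reverse construction yields a polygon decomposition with an essential non-contractible loop, giving a surface of genus $0$ with two boundary components, namely an annulus. The main obstacle is the boundary count in this annular case: one must verify that tracing the boundary cycles of the ribbon graph (equipped with its cyclic ordering of arrows at each vertex) produces exactly two circles, one on each side of the central loop. This relies on every length-two composition of arrows being a non-relation, so the only contributions to the boundary structure come from the two cyclic arrangements of arrows ``inside'' and ``outside'' the loop in $Q$; combined with $\chi(\Surf(A))=0$ from the Euler characteristic count, this pins down $\Surf(A)$ as the annulus.
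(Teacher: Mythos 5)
Your argument is correct, but it takes a genuinely different (and more self-contained) route than the paper. The paper's proof is a two-line citation: it invokes Remark~\ref{rem:hereditary-gentle-algebras} to split into the cases $\A_n$ and $\tA_n$ and then quotes \cite[Corollaries 1.23 and 1.24]{OPS2018} for the conclusion that $\Surf(A)$ is a disk (with $n+1$ marked points) or an annulus, respectively. You instead re-derive those cited facts from scratch by reconstructing the ribbon surface and reading off its topology: $\Surf(A)$ deformation retracts onto the graph formed by the $\gbullet$-arcs, which is homotopy equivalent to the underlying graph of $\Q$, so a tree for $\A_n$ (contractible, hence a disk) and a single cycle for $\tA_n$ ($\chi=0$). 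What your approach buys is independence from the external reference and an explicit explanation of \emph{why} heredity matters (no relations means no extra identifications altering the genus or boundary count); what it costs is that several steps remain at the level of a sketch. One small simplification: the boundary-tracing verification you flag as ``the main obstacle'' in the $\tA_n$ case is unnecessary, since for an orientable surface with non-empty boundary the relation $\chi=2-2g-b=0$ already forces $g=0$ and $b=2$, so the annulus is pinned down by the Euler characteristic and orientability alone.
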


\begin{proof}
By Remark \ref{rem:hereditary-gentle-algebras}, if the quiver $\Q$ is of type $\A_n$, then $\Surf(A)$ is a disk and the number of $\gbullet$-marked points is $n+1$, where $n$ is the number of vertices of $\Q$ \cite[Corollary 1.23]{OPS2018}.
If the quiver $\Q$ of type $\tA_n$, then $\Surf(A)$ is an annulus \cite[Corollary 1.24]{OPS2018}.
\end{proof}

By Proposition \ref{prop:disk-and-annulus}, the (graded) marked ribbon surfaces of $A$ are the following two cases. Notice that we do not draw the foliations of $\SURF^{\F}(\kk\A_n)$ and $\SURF^{\F}(\kk\tA_n)$. However, we can find foliations and gradings of $\rbullet$-arcs such that $|\alpha|=0$ for any arrow $\alpha$ of $\kk\A_n$ or $\kk\tA_n$.

\newpage

\begin{center}
\begin{multicols}{2}
\begin{figure}[H]
\begin{center}
\definecolor{ffqqqq}{rgb}{1,0,0}
\definecolor{qqwuqq}{rgb}{0,0.5,0}
\begin{tikzpicture}
\draw [line width=1pt][rotate around={0:(0,0)}] (0,0) ellipse (3cm and 2cm);
\fill [qqwuqq] ( 0.00, 2.00) circle (2.5pt);
\fill [qqwuqq] (-3.00, 0.00) circle (2.5pt);
\fill [qqwuqq] (-2.82,-0.68) circle (2.5pt);
\fill [qqwuqq] (-1.5 ,-1.73) circle (2.5pt);
\draw [qqwuqq][line width=0.65pt] ( 0.00, 2.00)--(-3.00, 0.00);
\draw [qqwuqq][line width=0.65pt] ( 0.00, 2.00)--(-2.82,-0.68);
\draw [qqwuqq] (-1.7 ,-0.80) node{$\cdots$};
\draw [qqwuqq][line width=0.65pt] ( 0.00, 2.00)--(-1.5 ,-1.73);
\fill [qqwuqq] ( 1.02, 1.88) circle (2.5pt);
\fill [qqwuqq] ( 2.59, 1.  ) circle (2.5pt);
\draw [qqwuqq][line width=0.65pt] (-1.5 ,-1.73)--( 1.02, 1.88);
\draw [qqwuqq][line width=0.65pt] (-1.5 ,-1.73)--( 2.59, 1.  );
\draw [qqwuqq] ( 1.  , 0.80) node{$\cdots$};
\fill [qqwuqq] ( 3.  , 0.  ) circle (2.5pt);
\fill [qqwuqq] ( 0.  ,-2.  ) circle (2.5pt);
\fill [qqwuqq] ( 2.12,-1.41) circle (2.5pt);
\draw [qqwuqq][line width=0.65pt] ( 3.  , 0.  )--( 0.  ,-2.  );
\draw [qqwuqq][line width=0.65pt] ( 3.  , 0.  )--( 2.12,-1.41);
\draw [qqwuqq] ( 1.  ,-0.80) node{$\cdots$};
\draw [qqwuqq] ( 2.1 ,-1.  ) node{$\cdots$};
\draw [red][line width=0.65pt] ( 2.95, 0.35) to[out= 135, in=-135] ( 2.72, 0.85) [dash pattern=on 2pt off 2pt];
\draw [red][line width=0.65pt] ( 2.72, 0.85) to[out= 180, in=-90 ] ( 2.3 , 1.28);
\draw [red][line width=0.65pt] ( 2.3 , 1.28) to[out= 180, in=-40 ] ( 1.50, 1.73) [dash pattern=on 2pt off 2pt];
\draw [red][line width=0.65pt] ( 1.50, 1.73) to[out= 190, in=-30 ] ( 0.52, 1.97);
\fill [red] ( 2.95, 0.35) circle (2.5pt); \fill [white] ( 2.95, 0.35) circle (1.8pt);
\fill [red] ( 2.72, 0.85) circle (2.5pt); \fill [white] ( 2.72, 0.85) circle (1.8pt);
\fill [red] ( 2.3 , 1.28) circle (2.5pt); \fill [white] ( 2.3 , 1.28) circle (1.8pt);
\fill [red] ( 1.50, 1.73) circle (2.5pt); \fill [white] ( 1.50, 1.73) circle (1.8pt);
\fill [red] ( 0.52, 1.97) circle (2.5pt); \fill [white] ( 0.52, 1.97) circle (1.8pt);
\draw [red][line width=0.65pt] (-2.12, 1.41) -- (-2.95,-0.35);
\draw [red][line width=0.65pt] (-2.95,-0.35) to[out=-40 , in= 90 ] (-2.60,-1.  );
\draw [red][line width=0.65pt] (-2.60,-1.  ) to[out= 0  , in= 90 ] (-1.93,-1.53) [dash pattern=on 2pt off 2pt];
\draw [red][line width=0.65pt] (-1.93,-1.53) -- ( 0.52, 1.97);
\fill [red] (-2.12, 1.41) circle (2.5pt); \fill [white] (-2.12, 1.41) circle (1.8pt);
\fill [red] (-2.95,-0.35) circle (2.5pt); \fill [white] (-2.95,-0.35) circle (1.8pt);
\fill [red] (-2.60,-1.  ) circle (2.5pt); \fill [white] (-2.60,-1.  ) circle (1.8pt);
\fill [red] (-1.93,-1.53) circle (2.5pt); \fill [white] (-1.93,-1.53) circle (1.8pt);
\draw [red][line width=0.65pt] (-1.02,-1.87) -- ( 2.3 , 1.28);
\draw [red][line width=0.65pt] (-1.02,-1.87) to[out= 30 , in= 140] (-0.52,-1.97)[dash pattern=on 2pt off 2pt];
\draw [red][line width=0.65pt] (-0.52,-1.97) to[out= 30 , in= 150] ( 0.77,-1.93);
\draw [red][line width=0.65pt] ( 0.77,-1.93) to[out= 40 , in= 150] ( 1.5 ,-1.73)[dash pattern=on 2pt off 2pt];
\draw [red][line width=0.65pt] ( 1.5 ,-1.73) -- ( 2.81,-0.68);
\fill [red] (-1.02,-1.87) circle (2.5pt); \fill [white] (-1.02,-1.87) circle (1.8pt);
\fill [red] (-0.52,-1.97) circle (2.5pt); \fill [white] (-0.52,-1.97) circle (1.8pt);
\fill [red] ( 0.77,-1.93) circle (2.5pt); \fill [white] ( 0.77,-1.93) circle (1.8pt);
\fill [red] ( 1.5 ,-1.73) circle (2.5pt); \fill [white] ( 1.5 ,-1.73) circle (1.8pt);
\fill [red] ( 2.81,-0.68) circle (2.5pt); \fill [white] ( 2.81,-0.68) circle (1.8pt);
\end{tikzpicture}
\end{center}
\caption{$\SURF^{\F}(\kk\A_n)$. }
\label{fig:hga An}
\end{figure}

\begin{figure}[H]
\definecolor{ffqqqq}{rgb}{1,0,0}
\definecolor{qqwuqq}{rgb}{0,0.5,0}
\begin{tikzpicture} 
\draw[line width=1.2pt] (0,0) circle (2cm);
\filldraw[color=black!20] (0,0) circle (1cm);
\draw[line width=1.2pt](0,0) circle (1cm);
\fill [qqwuqq] ( 2.  , 0.  ) circle (2.5pt) [opacity=0.5];
\fill [qqwuqq] ( 1.  , 1.73) circle (2.5pt);
\fill [qqwuqq] ( 1.73, 1.  ) circle (2.5pt);
\fill [qqwuqq] ( 0.  , 2.  ) circle (2.5pt);
\fill [qqwuqq] (-1.  , 1.73) circle (2.5pt);
\fill [qqwuqq] (-1.73, 1.  ) circle (2.5pt);
\fill [qqwuqq] (-2.  , 0.  ) circle (2.5pt);
\fill [qqwuqq] (-1.73,-1.  ) circle (2.5pt);
\fill [qqwuqq] (-1.  ,-1.73) circle (2.5pt);
\fill [qqwuqq] ( 0.  ,-2.  ) circle (2.5pt);
\fill [qqwuqq] ( 1.  ,-1.73) circle (2.5pt);
\fill [qqwuqq] ( 1.73,-1.  ) circle (2.5pt);
\fill [qqwuqq] (0,1) circle (2.5pt)  [opacity=0];
\fill [qqwuqq] (-1.41*0.5,1.41*0.5) circle (2.5pt);
\fill [qqwuqq] (-1,0) circle (2.5pt) [opacity=0];
\fill [qqwuqq] (-1.41*0.5,-1.41*0.5) circle (2.5pt);
\fill [qqwuqq] (0,-1) circle (2.5pt) [opacity=0];
\fill [qqwuqq] (1.41*0.5,-1.41*0.5) circle (2.5pt);
\fill [qqwuqq] (1,0) circle (2.5pt)  [opacity=0];
\fill [qqwuqq] (1.41*0.5,1.41*0.5) circle (2.5pt);
\draw[qqwuqq][line width=0.65pt][dotted][opacity=0.5] (0,2)--(0,1);
\draw[qqwuqq][line width=0.65pt] (0,2)--(-1.41*0.5,1.41*0.5);
\draw[qqwuqq][line width=0.65pt] (0,2)--(1.41*0.5,1.41*0.5);
\draw[qqwuqq][line width=0.65pt][dotted][opacity=0.5] (-2,0)--(-1,0);
\draw[qqwuqq][line width=0.65pt] (-2,0)--(-1.41*0.5,1.41*0.5);
\draw[qqwuqq][line width=0.65pt] (-2,0)--(-1.41*0.5,-1.41*0.5);
\draw[qqwuqq][line width=0.65pt][dotted][opacity=0.5] (0,-2)--(0,-1);
\draw[qqwuqq][line width=0.65pt] (0,-2)--(-1.41*0.5,-1.41*0.5);
\draw[qqwuqq][line width=0.65pt] (0,-2)--(1.41*0.5,-1.41*0.5);
\draw[qqwuqq][line width=0.65pt][dotted][opacity=0.5] (2,0)--(1,0);
\draw[qqwuqq][line width=0.65pt][dotted][opacity=0.5] (2,0)--(1.41*0.5,1.41*0.5);
\draw[qqwuqq][line width=0.65pt][dotted][opacity=0.5] (2,0)--(1.41*0.5,-1.41*0.5);
\draw[qqwuqq][line width=0.65pt][dotted][opacity=0.5] (1.41*0.5,1.41*0.5) -- (1.41,1.41);
\draw[qqwuqq][line width=0.65pt] (1.41*0.5,1.41*0.5) -- (1.73,1);
\draw[qqwuqq][line width=0.65pt] (1.41*0.5,1.41*0.5) -- (1,1.73);
\draw[qqwuqq][line width=0.65pt][dotted][opacity=0.5] (-1.41*0.5,1.41*0.5) -- (-1.41,1.41);
\draw[qqwuqq][line width=0.65pt] (-1.41*0.5,1.41*0.5) -- (-1.73,1);
\draw[qqwuqq][line width=0.65pt] (-1.41*0.5,1.41*0.5) -- (-1,1.73);
\draw[qqwuqq][line width=0.65pt][dotted][opacity=0.5] (-1.41*0.5,-1.41*0.5) -- (-1.41,-1.41);
\draw[qqwuqq][line width=0.65pt] (-1.41*0.5,-1.41*0.5) -- (-1.73,-1);
\draw[qqwuqq][line width=0.65pt] (-1.41*0.5,-1.41*0.5) -- (-1,-1.73);
\draw[qqwuqq][line width=0.65pt][dotted][opacity=0.5] (1.41*0.5,-1.41*0.5) -- (1.41,-1.41);
\draw[qqwuqq][line width=0.65pt] (1.41*0.5,-1.41*0.5) -- (1.73,-1);
\draw[qqwuqq][line width=0.65pt] (1.41*0.5,-1.41*0.5) -- (1,-1.73);
\draw [red][line width=0.65pt] ( 1.93, 0.51) to[out= 150,in= -90] ( 1.58, 1.22);
\draw [red][line width=0.65pt] ( 1.58, 1.22) to[out= 180,in= -90] ( 1.22, 1.58) [dash pattern=on 2pt off 2pt];
\draw [red][line width=0.65pt] ( 1.22, 1.58) to[out= 180,in= -60] ( 0.52, 1.93); 
\draw [red][line width=0.65pt] (-1.93, 0.51) to[out=  30,in= -90] (-1.58, 1.22);
\draw [red][line width=0.65pt] (-1.58, 1.22) to[out=   0,in= -90] (-1.22, 1.58) [dash pattern=on 2pt off 2pt];
\draw [red][line width=0.65pt] (-1.22, 1.58) to[out=   0,in=-120] (-0.52, 1.93); 
\draw [red][line width=0.65pt] (-1.93,-0.51) to[out= -30,in=  90] (-1.58,-1.22);
\draw [red][line width=0.65pt] (-1.58,-1.22) to[out=   0,in=  90] (-1.22,-1.58) [dash pattern=on 2pt off 2pt];
\draw [red][line width=0.65pt] (-1.22,-1.58) to[out=   0,in= 120] (-0.52,-1.93); 
\draw [red][line width=0.65pt] ( 1.93,-0.51) to[out=-150,in=  90] ( 1.58,-1.22);
\draw [red][line width=0.65pt] ( 1.58,-1.22) to[out=-180,in=  90] ( 1.22,-1.58) [dash pattern=on 2pt off 2pt];
\draw [red][line width=0.65pt] ( 1.22,-1.58) to[out=-180,in= 120] ( 0.52,-1.93); 
\draw [red][line width=0.65pt] ( 1.93, 0.51) -- ( 0.92, 0.38) [dash pattern=on 2pt off 2pt];
\draw [red][line width=0.65pt] ( 0.51, 1.93) -- ( 0.38, 0.92);
\draw [red][line width=0.65pt] (-0.51, 1.93) -- (-0.38, 0.92);
\draw [red][line width=0.65pt] (-1.93, 0.51) -- (-0.92, 0.38);
\draw [red][line width=0.65pt] (-1.93,-0.51) -- (-0.92,-0.38);
\draw [red][line width=0.65pt] (-0.51,-1.93) -- (-0.38,-0.92);
\draw [red][line width=0.65pt] ( 0.51,-1.93) -- ( 0.38,-0.92);
\draw [red][line width=0.65pt] ( 1.93,-0.51) -- ( 0.92,-0.38) [dash pattern=on 2pt off 2pt];
\draw [red][line width=0.65pt] ( 0.38, 0.92) to[out=  90,in=  90] (-0.38, 0.92) [dash pattern=on 2pt off 2pt];
\draw [red][line width=0.65pt] (-0.92, 0.38) to[out= 180,in= 180] (-0.92,-0.38) [dash pattern=on 2pt off 2pt];
\draw [red][line width=0.65pt] (-0.38,-0.92) to[out= -90,in= -90] ( 0.38,-0.92) [dash pattern=on 2pt off 2pt];
\draw [red][line width=0.65pt] ( 0.92,-0.38) to[out=   0,in=   0] ( 0.92, 0.38) [dash pattern=on 2pt off 2pt];
\fill [red] ( 1.93, 0.51) circle (2.5pt); \fill [white] ( 1.93, 0.51) circle (1.8pt); 
\fill [red] ( 1.58, 1.22) circle (2.5pt); \fill [white] ( 1.58, 1.22) circle (1.8pt); 
\fill [red] ( 1.22, 1.58) circle (2.5pt); \fill [white] ( 1.22, 1.58) circle (1.8pt); 
\fill [red] ( 0.52, 1.93) circle (2.5pt); \fill [white] ( 0.52, 1.93) circle (1.8pt); 
\fill [red] (-0.52, 1.93) circle (2.5pt); \fill [white] (-0.52, 1.93) circle (1.8pt); 
\fill [red] (-1.22, 1.58) circle (2.5pt); \fill [white] (-1.22, 1.58) circle (1.8pt); 
\fill [red] (-1.58, 1.22) circle (2.5pt); \fill [white] (-1.58, 1.22) circle (1.8pt); 
\fill [red] (-1.93, 0.51) circle (2.5pt); \fill [white] (-1.93, 0.51) circle (1.8pt); 
\fill [red] (-1.93,-0.51) circle (2.5pt); \fill [white] (-1.93,-0.51) circle (1.8pt); 
\fill [red] (-1.58,-1.22) circle (2.5pt); \fill [white] (-1.58,-1.22) circle (1.8pt); 
\fill [red] (-1.22,-1.58) circle (2.5pt); \fill [white] (-1.22,-1.58) circle (1.8pt); 
\fill [red] (-0.52,-1.93) circle (2.5pt); \fill [white] (-0.52,-1.93) circle (1.8pt); 
\fill [red] ( 0.52,-1.93) circle (2.5pt); \fill [white] ( 0.52,-1.93) circle (1.8pt); 
\fill [red] ( 1.22,-1.58) circle (2.5pt); \fill [white] ( 1.22,-1.58) circle (1.8pt); 
\fill [red] ( 1.58,-1.22) circle (2.5pt); \fill [white] ( 1.58,-1.22) circle (1.8pt); 
\fill [red] ( 1.93,-0.51) circle (2.5pt); \fill [white] ( 1.93,-0.51) circle (1.8pt); 
\fill [red] ( 0.92, 0.38) circle (2.5pt); \fill [white] ( 0.92, 0.38) circle (1.8pt); 
\fill [red] ( 0.38, 0.92) circle (2.5pt); \fill [white] ( 0.38, 0.92) circle (1.8pt); 
\fill [red] (-0.38, 0.92) circle (2.5pt); \fill [white] (-0.38, 0.92) circle (1.8pt); 
\fill [red] (-0.92, 0.38) circle (2.5pt); \fill [white] (-0.92, 0.38) circle (1.8pt); 
\fill [red] (-0.92,-0.38) circle (2.5pt); \fill [white] (-0.92,-0.38) circle (1.8pt); 
\fill [red] (-0.38,-0.92) circle (2.5pt); \fill [white] (-0.38,-0.92) circle (1.8pt); 
\fill [red] ( 0.38,-0.92) circle (2.5pt); \fill [white] ( 0.38,-0.92) circle (1.8pt); 
\fill [red] ( 0.92,-0.38) circle (2.5pt); \fill [white] ( 0.92,-0.38) circle (1.8pt); 
\end{tikzpicture}
\caption{$\SURF^{\F}(\kk\tA_n)$}
\label{fig:hga tilde An}
\end{figure}
\end{multicols} 
\end{center}


Denote by $\PC(\SURF^{\F})$ the set of all permissible curves with endpoints lying in $\M\cup\E$ and by $\CC(\SURF^{\F})$ the set of all permissible curves without endpoints (up to homotopy). Denote by $\AC_{\m}(\SURF^{\F})$ the set of all admissible curves with endpoints lying in $\M$ and by $\AC_{\oslash}(\SURF^{\F})$ the set of all admissible curves without endpoints (up to homotopy). Then

\begin{theorem} \label{thm:OPS and BCS corresponding}
Let $\mathscr{J}$ be the set of all Jordan blocks with non-zero eigenvalue.
\begin{itemize}
  \item[\rm(1)] {\rm \cite[Theorems 3.8 and 3.9]{BCS2019}}
    There exists a bijection
    \[ \MM: \PC(\SURF^{\F_A}(A)) \cup (\CC(\SURF^{\F_A}(A))\times\mathscr{J}) \to \ind(\modcat A) \]
    between the set $\PC(\SURF^{\F_A}(A)) \cup (\CC(\SURF^{\F_A}(A))\times\mathscr{J})$ of all permissible curves and the set $\ind(\modcat A)$ of all isoclasses of indecomposable modules in $\modcat A$.

  \item[\rm(2)] {\rm \cite[Theorem 2.12]{OPS2018}}
    There exists a bijection
    \[\X: \AC_{\m}(\SURF^{\F_A}(A)) \cup (\AC_{\oslash}(\SURF^{\F_A}(A)) \times \mathscr{J}) \to \ind(\per A),\]
    between the set $\AC_{\m}(\SURF^{\F_A}(A)) \cup (\AC_{\oslash}(\SURF^{\F_A}(A)) \times \mathscr{J})$ and the set of all isoclasses of indecomposable objects in $\per A$.
\end{itemize}
\end{theorem}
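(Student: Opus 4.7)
The plan is to establish both bijections by leveraging the dictionary between $\gbullet$-arcs and quiver vertices set up in Construction~\ref{construction}, combined with the Butler--Ringel-type classification of indecomposables over gentle algebras. The key observation is that the defining condition of a permissible curve — that two consecutively crossed $\gbullet$-arcs $a^1_\gbullet, a^2_\gbullet$ share an endpoint $p \in \M$ — is precisely the condition that the corresponding vertices $\mathfrak{v}(a^1_\gbullet), \mathfrak{v}(a^2_\gbullet) \in \Q_0$ are connected by an arrow in $\Q$, by Step~2 of Construction~\ref{construction}. Moreover, the elementary $\gbullet$-polygons realise exactly the paths that are \emph{non-zero} in $A = \kk\Q/\I$: the gentle relation of (G4) corresponds geometrically to a segment that would have to pass through a $\gbullet$-marked point at which no triangle identifies consecutive arcs.

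For part (1), given a permissible curve $c$ with endpoints in $\M \cup \E$, I would read off the ordered sequence of $\gbullet$-arcs it crosses, obtaining a walk $w(c) = v_0 \xrightarrow{\alpha_1 \text{ or }\alpha_1^{-1}} v_1 \xrightarrow{} \cdots$ in $\Q$. The permissibility condition guarantees that $w(c)$ is a \emph{string} in the sense of Butler--Ringel, and the choice of endpoints in $\M$ versus $\E$ fixes the boundary behaviour of the string (an endpoint in $\E$, lying inside a digon, encodes a string terminating at a source/sink forbidden by a gentle relation, while an endpoint in $\M$ corresponds to a genuine truncation of the string at a vertex). Sending $c \mapsto M(w(c))$, the string module of $w(c)$, gives the first half of $\MM$. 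For closed permissible curves, $w(c)$ becomes a cyclic word, i.e., a \emph{band}, and pairing with a Jordan block $J_\lambda^n$ with $\lambda \neq 0$ yields the band module $M(w(c), \lambda, n)$; this accounts for the factor of $\mathscr{J}$. One then checks that homotopy equivalence of curves matches equivalence of strings (up to formal inverse) and that closed curves are identified up to cyclic rotation, so that $\MM$ is well-defined. Surjectivity and injectivity follow from the Butler--Ringel classification together with the uniqueness of the geodesic representative in each homotopy class.

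For part (2), I would proceed analogously but replace strings by the string/band complexes of Bekkert--Merklen classifying $\ind(\per A)$. An admissible curve $\tc$ is a \emph{graded} $\gbullet$-curve (or a graded closed curve), and I would read the sequence of $\gbullet$-arcs it crosses to build a minimal complex of indecomposable projectives, where the $\rbullet$-dual arc system records the differentials: each crossing segment between $a^1_\gbullet$ and $a^2_\gbullet$ contributes an arrow of degree $|\alpha| = 1 - \ii(\ta^1_\rbullet, \ta^2_\rbullet)$ in the differential, via Step~3 of Construction~\ref{construction}. The grading $\tc$ of the curve determines the overall cohomological shift, and the compatibility condition $\ii_{a_\gbullet \cap a_\rbullet}(\ta_\gbullet, \ta_\rbullet) = 0$ between a $\gbullet$-arc and its dual guarantees that an arc $a_\gbullet$ itself is sent to the indecomposable projective $P(\mathfrak{v}(a_\gbullet))$ concentrated in degree $0$. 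Closed admissible curves paired with Jordan blocks produce the band complexes, giving $\X$.

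The main obstacle in both parts is verifying that the constructions descend to homotopy classes and that the inverse maps (from indecomposables back to curves) are well-defined. For part (1) this amounts to tracing how the Reidemeister-type elementary homotopies at $\gbullet$- and $\rbullet$-marked points correspond to the canonical operations on strings (reversal, and for bands cyclic rotation); an isotopy that merely slides a curve across an elementary $\gbullet$-polygon must leave $w(c)$ unchanged, while an isotopy pushing an endpoint past a $\gbullet$-vertex outside $\M \cup \E$ is precisely forbidden by permissibility. For part (2) the analogous check is more delicate because one must additionally track how the grading $\tc$ transforms, and show that a change of foliation representative does not alter the quasi-isomorphism class of the resulting complex; here the intersection index calculus and the formula in Step~3 make the bookkeeping possible. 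Once this is done, injectivity on each side follows from a geodesic uniqueness argument on the universal cover of $\Surf$, and surjectivity reduces to realising each Bekkert--Merklen string/band complex explicitly as a graded curve by inverting the reading procedure.
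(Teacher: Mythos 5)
The paper does not prove this theorem: it is stated purely as a recollection of known results, with part (1) cited to Baur--Coelho Sim\~{o}es and part (2) to Opper--Plamondon--Schroll, so there is no internal proof to compare your attempt against. Your sketch follows the same strategy as those references --- reading off a Butler--Ringel string (resp.\ a Bekkert--Merklen homotopy string/band complex) from the ordered sequence of $\gbullet$-arcs (resp.\ graded $\rbullet$-arc crossings) that a curve meets, with closed curves paired with Jordan blocks giving band objects --- and the overall architecture is sound. One imprecision worth noting in part (2): the grading $\tc$ does not merely fix an \emph{overall} cohomological shift of the complex; it assigns an integer to each individual intersection of $\tc$ with the dual arc system, and these integers are the cohomological degrees of the corresponding indecomposable projective summands, which in general are not all equal up to a common translate. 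This local-degree bookkeeping is exactly what makes the map $\X$ land on arbitrary string complexes rather than only on shifted modules, and it is also the mechanism your Theorem~\ref{thm:curve embeding} later exploits, so the distinction matters. The remaining ``one then checks'' steps (homotopy invariance, injectivity via geodesic representatives, surjectivity via inverting the reading procedure) are genuinely the content of the cited theorems and would need the full arguments of those papers to close.
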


The indecomposable objects in $\modcat A$ (resp., $\per A$) corresponding to the curves in $\PC(\SURF^{\F_A}(A))$ (resp., $\AC_{\m}(\SURF^{\F_A}(A))$) are called {\defines string};

The indecomposable objects in $\modcat A$ (resp., $\per A$) corresponding to the curves in $\CC(\SURF^{\F_A}(A)$ (resp., $\AC_{\oslash}(\SURF^{\F_A}(A))$) are called {\defines band}.

\section{Curve embedding from module categories to derived categories} \label{sect:embedding}

In this section, we study the curve embedding from module categories to derived categories and show that any permissible curve $c$ can be viewed as an admissible curve $\tc$ such that $\X(\tc)$ is a minimal projective representation of $\MM(c)$.

\subsection{Projective representation of string modules}
Let $c$ be a permissible curve. Assume that $c$ consecutively crosses $\gbullet$-arcs $\agreen{c}{1}$, $\agreen{c}{2}$, $\ldots$, $\agreen{c}{m(c)}$ ($m(c)\in \NN^+$) in the interior of the surface. Denote by $p^c_i=c(s^c_i)$ ($0=s^c_0 < s^c_1 < \cdots < s^c_{m(c)} < s^c_{m(c)+1}=1$) the $i$-th intersection obtained by $c$ intersects with $\gbullet$-arcs and by $c_{(i,j)}$, say {\defines open arc segment} (=$\gbullet$-arc segment), the segment obtained by $\agreen{c}{i}$ and $\agreen{c}{j}$ cutting $c$. Then we have the following nine cases.

\begin{figure}[H]
\definecolor{ffqqqq}{rgb}{1,0,0}
\definecolor{qqwuqq}{rgb}{0,0.5,0}
\begin{tikzpicture}
\draw[black] (-0.71-1,-0.71) node[left]{$\bSurf$};
\draw[black][line width=1.2pt] (-1,-1) arc (-90:-180:1);
\draw[black][line width=1.2pt] (-1,-1) -- ( 1,-1) [dotted];
\draw[black][line width=1.2pt] ( 1,-1) arc (-90:   0:1);
\draw[black][line width=1.2pt] ( 2, 0) arc (  0:  90:1) [dotted];
\draw[black][line width=1.2pt] ( 1, 1) -- (-1, 1) [dotted];
\draw[black][line width=1.2pt] (-1, 1) arc (  90: 180:1) [dotted];
\draw[qqwuqq] (-1,-1)--(-1, 1) [line width=1pt];
\fill[qqwuqq] (-1,-1) circle (0.1cm);
\fill[qqwuqq] (-1, 1) circle (0.1cm);
\draw[qqwuqq] ( 1,-1)--( 1, 1) [line width=1pt];
\fill[qqwuqq] ( 1,-1) circle (0.1cm);
\fill[qqwuqq] ( 1, 1) circle (0.1cm);
\draw[qqwuqq] (-1, 1)--(-2, 0) [line width=1pt];
\fill[qqwuqq] (-1, 1) circle (0.1cm);
\fill[qqwuqq] (-2, 0) circle (0.1cm);
\draw[qqwuqq] ( 1, 1)--( 2, 0) [line width=1pt];
\fill[qqwuqq] ( 2, 0) circle (0.1cm);
\draw[blue][line width=1pt](-2,0) to[out=45,in=-135] (2,0);
\draw[blue] (0,0) node[above]{$c$};
\draw (0,-1.5) node{Type 1.1};
\end{tikzpicture}
\ \
\begin{tikzpicture}
\draw[black] (-0.71-1, 0.71) node[left]{$\bSurf$};
\draw[black][line width=1.2pt] (-1,-1) arc (-90:-180:1)[dotted];
\draw[black][line width=1.2pt] (-1,-1) -- ( 1,-1)[dotted];
\draw[black][line width=1.2pt] ( 1,-1) arc (-90:   0:1);
\draw[black][line width=1.2pt] ( 2, 0) arc (  0:  90:1)[dotted];
\draw[black][line width=1.2pt] ( 1, 1) -- (-1, 1) [dotted];
\draw[black][line width=1.2pt] (-1, 1) arc (  90: 180:1);
\draw[qqwuqq] (-1,-1)--(-1, 1) [line width=1pt];
\fill[qqwuqq] (-1,-1) circle (0.1cm);
\fill[qqwuqq] (-1, 1) circle (0.1cm);
\draw[qqwuqq] ( 1,-1)--( 1, 1) [line width=1pt];
\fill[qqwuqq] ( 1,-1) circle (0.1cm);
\fill[qqwuqq] ( 1, 1) circle (0.1cm);
\draw[qqwuqq] (-1,-1)--(-2, 0) [line width=1pt];
\fill[qqwuqq] (-1, 1) circle (0.1cm);
\fill[qqwuqq] (-2, 0) circle (0.1cm);
\draw[qqwuqq] ( 1, 1)--( 2, 0) [line width=1pt];
\fill[qqwuqq] ( 2, 0) circle (0.1cm);
\draw[blue][line width=1pt](-2,0) to[out=45,in=-135] (2,0);
\draw[blue] (0,0) node[above]{$c$};
\draw (0,-1.5) node{Type 1.2};
\end{tikzpicture}
\ \
\begin{tikzpicture}
\draw[black] (-0.71-1,-0.71) node[left]{$\bSurf$};
\draw[black][line width=1.2pt] (-1,-1) arc (-90:-180:1);
\draw[black][line width=1.2pt] (-1,-1) -- ( 1,-1)[dotted];
\draw[black][line width=1.2pt] ( 1,-1) arc (-90:   0:1)[dotted];
\draw[black][line width=1.2pt] ( 2, 0) arc (  0:  90:1);
\draw[black][line width=1.2pt] ( 1, 1) -- (-1, 1) [dotted];
\draw[black][line width=1.2pt] (-1, 1) arc (  90: 180:1) [dotted];
\draw[qqwuqq] (-1,-1)--(-1, 1) [line width=1pt];
\fill[qqwuqq] (-1,-1) circle (0.1cm);
\fill[qqwuqq] (-1, 1) circle (0.1cm);
\draw[qqwuqq] ( 1,-1)--( 1, 1) [line width=1pt];
\fill[qqwuqq] ( 1,-1) circle (0.1cm);
\fill[qqwuqq] ( 1, 1) circle (0.1cm);
\draw[qqwuqq] (-1, 1)--(-2, 0) [line width=1pt];
\fill[qqwuqq] (-1, 1) circle (0.1cm);
\fill[qqwuqq] (-2, 0) circle (0.1cm);
\draw[qqwuqq] ( 1,-1)--( 2, 0) [line width=1pt];
\fill[qqwuqq] ( 2, 0) circle (0.1cm);
\draw[blue][line width=1pt](-2,0) to[out=45,in=-135] (2,0);
\draw[blue] (0,0) node[above]{$c$};
\draw (0,-1.5) node{Type 1.3};
\end{tikzpicture}
\caption{Type 1: $c(0),c(1)\in \M$. }
\label{fig:PC type 1}
\end{figure}

\begin{figure}[H]
\definecolor{ffqqqq}{rgb}{1,0,0}
\definecolor{qqwuqq}{rgb}{0,0.5,0}
\begin{tikzpicture}
\draw[black] ( 0.71+1,-0.71) node[right]{$\bSurf$};
\draw[black][line width=1.2pt] (-1,-1) arc (-90:-180:1) [dotted];
\draw[black][line width=1.2pt] (-1,-1) -- ( 1,-1) [dotted];
\draw[black][line width=1.2pt] ( 1,-1) arc (-90:   0:1);
\draw[black][line width=1.2pt] ( 2, 0) arc (  0:  90:1) [dotted];
\draw[black][line width=1.2pt] ( 1, 1) -- (-1, 1) [dotted];
\draw[black][line width=1.2pt] (-1, 1) arc (  90: 180:1);
\draw[qqwuqq] (-1,-1)--(-1, 1) [line width=1pt];
\fill[qqwuqq] (-1,-1) circle (0.1cm);
\fill[qqwuqq] (-1, 1) circle (0.1cm);
\draw[qqwuqq] ( 1,-1)--( 1, 1) [line width=1pt];
\fill[qqwuqq] ( 1,-1) circle (0.1cm);
\fill[qqwuqq] ( 1, 1) circle (0.1cm);
\draw[qqwuqq] (-1, 1)--(-2, 0) [line width=1pt];
\fill[qqwuqq] (-1, 1) circle (0.1cm);
\fill[qqwuqq] (-2, 0) circle (0.1cm);
\draw[qqwuqq] ( 1, 1)--( 2, 0) [line width=1pt];
\fill[qqwuqq] ( 2, 0) circle (0.1cm);
\draw[red][line width=0.7pt] (-0.7-1,+0.7) circle (0.09cm);
\draw[blue][line width=1pt](-0.7-1,+0.7) to[out=-45,in=-135] (2,0);
\draw[blue] (0,-0.45) node[below]{$c$};
\draw (0,-1.5) node{Type 2.1};
\end{tikzpicture}
\ \
\begin{tikzpicture}
\draw[black] (0.71+1,-0.71) node[right]{$\bSurf$};
\draw[black][line width=1.2pt] (-1,-1) arc (-90:-180:1);
\draw[black][line width=1.2pt] (-1,-1) -- ( 1,-1) [dotted];
\draw[black][line width=1.2pt] ( 1,-1) arc (-90:   0:1);
\draw[black][line width=1.2pt] ( 2, 0) arc (  0:  90:1) [dotted];
\draw[black][line width=1.2pt] ( 1, 1) -- (-1, 1) [dotted];
\draw[black][line width=1.2pt] (-1, 1) arc (  90: 180:1);
\draw[qqwuqq] (-1,-1)--(-1, 1) [line width=1pt];
\fill[qqwuqq] (-1,-1) circle (0.1cm);
\fill[qqwuqq] (-1, 1) circle (0.1cm);
\draw[qqwuqq] ( 1,-1)--( 1, 1) [line width=1pt];
\fill[qqwuqq] ( 1,-1) circle (0.1cm);
\fill[qqwuqq] ( 1, 1) circle (0.1cm);
\draw[qqwuqq] (-1,-1)--(-2, 0) [line width=1pt];
\fill[qqwuqq] (-1, 1) circle (0.1cm);
\fill[qqwuqq] (-2, 0) circle (0.1cm);
\draw[qqwuqq] ( 1, 1)--( 2, 0) [line width=1pt];
\fill[qqwuqq] ( 2, 0) circle (0.1cm);
\draw[red][line width=0.7pt] (-0.7-1,-0.7) circle (0.09cm);
\draw[blue][line width=1pt](-0.7-1,-0.7) to[out=45,in=-135] (2,0);
\draw[blue] (0,-0.4) node[above]{$c$};
\draw (0,-1.5) node{Type 2.2};
\end{tikzpicture}
\ \
\begin{tikzpicture}
\draw[black] ( 0.71+1,+0.71) node[right]{$\bSurf$};
\draw[black][line width=1.2pt] (-1,-1) arc (-90:-180:1);
\draw[black][line width=1.2pt] (-1,-1) -- ( 1,-1)[dotted];
\draw[black][line width=1.2pt] ( 1,-1) arc (-90:   0:1)[dotted];
\draw[black][line width=1.2pt] ( 2, 0) arc (  0:  90:1);
\draw[black][line width=1.2pt] ( 1, 1) -- (-1, 1) [dotted];
\draw[black][line width=1.2pt] (-1, 1) arc (  90: 180:1);
\draw[qqwuqq] (-1,-1)--(-1, 1) [line width=1pt];
\fill[qqwuqq] (-1,-1) circle (0.1cm);
\fill[qqwuqq] (-1, 1) circle (0.1cm);
\draw[qqwuqq] ( 1,-1)--( 1, 1) [line width=1pt];
\fill[qqwuqq] ( 1,-1) circle (0.1cm);
\fill[qqwuqq] ( 1, 1) circle (0.1cm);
\draw[qqwuqq] (-1, 1)--(-2, 0) [line width=1pt];
\fill[qqwuqq] (-1, 1) circle (0.1cm);
\fill[qqwuqq] (-2, 0) circle (0.1cm);
\draw[qqwuqq] ( 1,-1)--( 2, 0) [line width=1pt];
\fill[qqwuqq] ( 2, 0) circle (0.1cm);
\draw[red][line width=0.7pt] (-0.7-1,+0.7) circle (0.09cm);
\draw[blue][line width=1pt](-0.7-1,+0.7) to[out=-45,in=-135] (2,0);
\draw[blue] (0,-0.4) node[above]{$c$};
\draw (0,-1.5) node{Type 2.3};
\end{tikzpicture}
\caption{Type 2: $c(0)\in \E$ and $c(1)\in \M$.}
\label{fig:PC type 2}
\end{figure}

\begin{figure}[H]
\definecolor{ffqqqq}{rgb}{1,0,0}
\definecolor{qqwuqq}{rgb}{0,0.5,0}
\begin{tikzpicture}
\draw[black] ( 0.71+1,-0.71) node[right]{$\bSurf$};
\draw[black][line width=1.2pt] (-1,-1) arc (-90:-180:1);
\draw[black][line width=1.2pt] (-1,-1) -- ( 1,-1) [dotted];
\draw[black][line width=1.2pt] ( 1,-1) arc (-90:   0:1);
\draw[black][line width=1.2pt] ( 2, 0) arc (  0:  90:1);
\draw[black][line width=1.2pt] ( 1, 1) -- (-1, 1) [dotted];
\draw[black][line width=1.2pt] (-1, 1) arc (  90: 180:1);
\draw[qqwuqq] (-1,-1)--(-1, 1) [line width=1pt];
\fill[qqwuqq] (-1,-1) circle (0.1cm);
\fill[qqwuqq] (-1, 1) circle (0.1cm);
\draw[qqwuqq] ( 1,-1)--( 1, 1) [line width=1pt];
\fill[qqwuqq] ( 1,-1) circle (0.1cm);
\fill[qqwuqq] ( 1, 1) circle (0.1cm);
\draw[qqwuqq] (-1, 1)--(-2, 0) [line width=1pt];
\fill[qqwuqq] (-1, 1) circle (0.1cm);
\fill[qqwuqq] (-2, 0) circle (0.1cm);
\draw[qqwuqq] ( 1, 1)--( 2, 0) [line width=1pt];
\fill[qqwuqq] ( 2, 0) circle (0.1cm);
\draw[red][line width=0.7pt] (-0.7-1,+0.7) circle (0.09cm);
\draw[red][line width=0.7pt] (+0.7+1,+0.7) circle (0.09cm);
\draw[blue][line width=1pt](-0.7-1,+0.7) to[out=-45,in=-135] (0.7+1,+0.7);
\draw[blue] (0,0) node[below]{$c$};
\draw (0,-1.5) node{Type 3.1};
\end{tikzpicture}
\ \
\begin{tikzpicture}
\draw[black] (0.71+1,-0.71) node[right]{$\bSurf$};
\draw[black][line width=1.2pt] (-1,-1) arc (-90:-180:1);
\draw[black][line width=1.2pt] (-1,-1) -- ( 1,-1) [dotted];
\draw[black][line width=1.2pt] ( 1,-1) arc (-90:   0:1);
\draw[black][line width=1.2pt] ( 2, 0) arc (  0:  90:1);
\draw[black][line width=1.2pt] ( 1, 1) -- (-1, 1) [dotted];
\draw[black][line width=1.2pt] (-1, 1) arc (  90: 180:1);
\draw[qqwuqq] (-1,-1)--(-1, 1) [line width=1pt];
\fill[qqwuqq] (-1,-1) circle (0.1cm);
\fill[qqwuqq] (-1, 1) circle (0.1cm);
\draw[qqwuqq] ( 1,-1)--( 1, 1) [line width=1pt];
\fill[qqwuqq] ( 1,-1) circle (0.1cm);
\fill[qqwuqq] ( 1, 1) circle (0.1cm);
\draw[qqwuqq] (-1,-1)--(-2, 0) [line width=1pt];
\fill[qqwuqq] (-1, 1) circle (0.1cm);
\fill[qqwuqq] (-2, 0) circle (0.1cm);
\draw[qqwuqq] ( 1, 1)--( 2, 0) [line width=1pt];
\fill[qqwuqq] ( 2, 0) circle (0.1cm);
\draw[red][line width=0.7pt] (-0.7-1,-0.7) circle (0.09cm);
\draw[red][line width=0.7pt] (+0.7+1,+0.7) circle (0.09cm);
\draw[blue][line width=1pt](-0.7-1,-0.7) to[out=45,in=-135] (0.7+1,+0.7);
\draw[blue] (0,-0) node[above]{$c$};
\draw (0,-1.5) node{Type 3.2};
\end{tikzpicture}
\ \
\begin{tikzpicture}
\draw[black] ( 0.71+1,+0.71) node[right]{$\bSurf$};
\draw[black][line width=1.2pt] (-1,-1) arc (-90:-180:1);
\draw[black][line width=1.2pt] (-1,-1) -- ( 1,-1)[dotted];
\draw[black][line width=1.2pt] ( 1,-1) arc (-90:   0:1);
\draw[black][line width=1.2pt] ( 2, 0) arc (  0:  90:1);
\draw[black][line width=1.2pt] ( 1, 1) -- (-1, 1) [dotted];
\draw[black][line width=1.2pt] (-1, 1) arc (  90: 180:1);
\draw[qqwuqq] (-1,-1)--(-1, 1) [line width=1pt];
\fill[qqwuqq] (-1,-1) circle (0.1cm);
\fill[qqwuqq] (-1, 1) circle (0.1cm);
\draw[qqwuqq] ( 1,-1)--( 1, 1) [line width=1pt];
\fill[qqwuqq] ( 1,-1) circle (0.1cm);
\fill[qqwuqq] ( 1, 1) circle (0.1cm);
\draw[qqwuqq] (-1, 1)--(-2, 0) [line width=1pt];
\fill[qqwuqq] (-1, 1) circle (0.1cm);
\fill[qqwuqq] (-2, 0) circle (0.1cm);
\draw[qqwuqq] ( 1,-1)--( 2, 0) [line width=1pt];
\fill[qqwuqq] ( 2, 0) circle (0.1cm);
\draw[red][line width=0.7pt] (-0.7-1,+0.7) circle (0.09cm);
\draw[red][line width=0.7pt] (+0.7+1,-0.7) circle (0.09cm);
\draw[blue][line width=1pt](-0.7-1,+0.7) to[out=-45,in=135] (+0.7+1,-0.7);
\draw[blue] (0,0) node[below]{$c$};
\draw (0,-1.5) node{Type 3.3};
\end{tikzpicture}
\caption{Type 3: $c(0), c(1)\in \E$.}
\label{fig:PC type 3}
\end{figure}



In order to characterize the projective representation of  $\MM(c)$, we need the following definition.

\begin{definition}\rm
Let $x$ be the common endpoint of $\agreen{c}{i-1}$ and $\agreen{c}{i}$ and $y$ be the common endpoint of $\agreen{c}{i}$ and $\agreen{c}{i+1}$. Then $\agreen{c}{i}$ is called a {\defines top $($resp., socle$)$ $\gbullet$-arc of $c$} if $x$ is on the right (resp., left) of $c$ and $y$ is on the left (resp., right) of $c$. Denote by $\top(c)$ (resp., $\soc(c)$) the set of all top (resp., socle) $\gbullet$-arcs of $c$.
\end{definition}

In particular, we have

\begin{remark} \rm
\begin{itemize}
  \item[(1)] If $c$ crosses only one $\gbullet$-arc, then this $\gbullet$-arc is not only a top but also a socle $\gbullet$-arc of $c$.
  \item[(2)] If $i=m(c)$ or $1$ and $x$ is on the right (resp., left) of $c$ and $y$ is on the left (resp., right) of $c$, then $\agreen{c}{i}$ is a top (resp., socle) $\gbullet$-arc of $c$.
\end{itemize}
\end{remark}

Let $c_{\simp}(a_{\gbullet})$ be the permissible curve crossing only one $\gbullet$-arc $a_{\gbullet}$. Then $\MM(c_{\simp}(a_{\gbullet}))$ is simple. Thus, we have
\begin{align}
  \MM(\top (c)) & := \bigoplus_{a_{\gbullet}\in\top(c)}\MM(c_{\simp}(a_{\gbullet})) \cong \top\MM(c); \label{formula:top and soc 1} \\
  \MM(\soc (c)) & := \bigoplus_{a_{\gbullet}\in\soc(c)}\MM(c_{\simp}(a_{\gbullet})) \cong \soc\MM(c). \label{formula:top and soc 2}
\end{align}

\begin{definition}\rm
We say the permissible curve $c$, denoted by $c_{\proj}(a_{\gbullet})$, is a {\defines projective curve corresponding to $a_{\gbullet}=\agreen{c}{i}$}, if there is a unique integer $1\le i\le m(c)$ satisfying the following conditions:
\begin{itemize}
  \item[(i)]
    $\agreen{c}{1}$, $\agreen{c}{2}$, $\ldots$, $\agreen{c}{i}$ has a common endpoint $x$ which is right to $c$, and
        there is no arc $a\in\Dgreen$ with endpoint $x$ such that $a$ is left to $\agreen{c}{1}$ at the point $x$;
  \item[(ii)]
    $\agreen{c}{i}$, $\agreen{c}{i+1}$, $\ldots$, $\agreen{c}{m(c)}$ has a common endpoint $y$ which is left to $c$, and there is no arc $\hat{a}\in\Dgreen$ with endpoint $y$ such that $\hat{a}$ is left to $\agreen{c}{m(c)}$ at the point $y$.
\end{itemize}
\end{definition}

It is easy to see that $\MM(c_{\proj}(a_{\gbullet}))$ is an indecomposable projective module.
For any $\gbullet$-arc $a_{\gbullet}$, we denote by $\overleftarrow{a_{\gbullet}}$ the $\gbullet$-arc $a_{\gbullet}'$ which is left to $a_{\gbullet}$ at the intersection of $a_{\gbullet}\cap a_{\gbullet}'$. In particular, if there are no $\gbullet$-arcs left to $a_{\gbullet}$, then we
write $\overleftarrow{a_{\gbullet}}=\varnothing$ and say $c_{\proj}(\overleftarrow{a_{\gbullet}})$ is a trivial permissible curve. In this case, $\MM(c_{\proj}(\overleftarrow{a_{\gbullet}}))=0$.

\begin{lemma} \label{lemm:proj cover}
Let $c$ be a permissible curve of $\SURF(A)$ and $\widehat{\soc(c)}$ be the set of all socle $\gbullet$-arcs of $c$ except $c_{\simp}(\agreen{c}{1})$ and $c_{\simp}(\agreen{c}{m(c)})$
{\footnote{Notice that $c_{\simp}(\agreen{c}{1})$ or $c_{\simp}(\agreen{c}{m(c)})$ may not in $\soc(c)$. }}.
Then the projective cover of $\MM(c)$ is
\[ p_0: P_0 = \bigoplus_{a_{\gbullet}\in\top(c)}\MM(c_{\proj}(a_{\gbullet})) \to \MM(c) \]
and the kernel $\ker p_0$ of $p_0$ is isomorphic to $Q_L\oplus Q\oplus Q_R$, where:
\[Q_L \cong \MM(c_{\proj}(\overleftarrow{\agreen{c}{1}})),\
Q \cong \bigoplus\limits_{a_{\gbullet}\in \widehat{\soc(c)}} \MM(c_{\proj}(a_{\gbullet})),\
Q_R \cong \MM(c_{\proj}(\overleftarrow{\agreen{c}{m(c)}})).\]
\end{lemma}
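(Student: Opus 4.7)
The plan is to verify first that $P_0$ is indeed a projective cover of $\MM(c)$, then compute $\ker p_0$ geometrically by reading off the ``overshoot'' of each projective summand against the string corresponding to $c$.

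\emph{Projective cover.} I would observe that each $\MM(c_{\proj}(a_{\gbullet}))$ is indecomposable projective with simple top $\MM(c_{\simp}(a_{\gbullet}))$, by the definition of a projective curve. Combining with the decomposition \eqref{formula:top and soc 1} of $\top\MM(c)$, one obtains $\top P_0 \cong \top\MM(c)$. Since each summand of $P_0$ is projective, the composition
\[
\MM(c_{\proj}(a_{\gbullet})) \twoheadrightarrow \MM(c_{\simp}(a_{\gbullet})) \hookrightarrow \top\MM(c)
\]
lifts through $\MM(c) \twoheadrightarrow \top\MM(c)$ to a morphism $\MM(c_{\proj}(a_{\gbullet})) \to \MM(c)$; summing these yields $p_0$, which is an isomorphism on tops and hence a projective cover by Nakayama's lemma.

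\emph{Kernel.} To identify $\ker p_0$, I plan to perform a local analysis for each top arc $a_{\gbullet} = \agreen{c}{i} \in \top(c)$. Unwinding the projective-curve definition shows that $c_{\proj}(a_{\gbullet})$ consists of a ``central'' subsegment of $c$ bounded by the two socle arcs adjacent to $a_{\gbullet}$ (or by an endpoint of $c$), together with possibly one or two ``overshoot'' tails extending past those endpoints. The image of the corresponding summand of $P_0$ under $p_0$ is the submodule of $\MM(c)$ supported on this central subsegment, so the entire overshoot lies in $\ker p_{a_{\gbullet}}$. Reassembling: for each interior socle arc $a_{\gbullet}\in\widehat{\soc(c)}$, the two top arcs immediately adjacent to $a_{\gbullet}$ both produce central substrings in their respective projectives that descend past $a_{\gbullet}$ and coincide there with the string of $\MM(c_{\proj}(a_{\gbullet}))$; the map $p_0$ identifies these two copies, leaving one full copy of $\MM(c_{\proj}(a_{\gbullet}))$ in $\ker p_0$ and producing the summand $Q$. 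At the left end of $c$ the overshoot of $\MM(c_{\proj}(\agreen{c}{1}))$ (when $\agreen{c}{1}\in\top(c)$) produces the summand $Q_L$, and symmetrically $Q_R$ appears at the right end. A dimension count $\dim_{\kk} P_0 - \dim_{\kk}\MM(c) = \dim_{\kk}\ker p_0$ then confirms that no further contributions are missed.

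\emph{Main obstacle.} The hard part will be the boundary analysis. The nine endpoint configurations for $c$ from Figures~\ref{fig:PC type 1}--\ref{fig:PC type 3}, combined with the possibilities that $\agreen{c}{1}$ or $\agreen{c}{m(c)}$ is a top arc, a socle arc, or neither, force a careful case analysis to verify that the overshoot at each end is captured exactly by $\overleftarrow{\agreen{c}{1}}$ and $\overleftarrow{\agreen{c}{m(c)}}$. In particular, the exclusion of $c_{\simp}(\agreen{c}{1})$ and $c_{\simp}(\agreen{c}{m(c)})$ from $\widehat{\soc(c)}$ exists precisely to prevent double-counting: when $\agreen{c}{1}$ is itself a socle arc of $c$, the hypothetical second adjacent top arc on its outer side is absent, so the corresponding socle contribution is absorbed entirely into $Q_L$ via $\overleftarrow{\agreen{c}{1}}$, and dually at the right end; the degenerate subcases $\overleftarrow{\agreen{c}{1}}=\varnothing$ or $\overleftarrow{\agreen{c}{m(c)}}=\varnothing$ (which force $Q_L=0$ or $Q_R=0$) correspond exactly to the configurations in which $c$ starts or ends at a point of $\E$ with no further room to extend.
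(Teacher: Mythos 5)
Your argument is correct and follows essentially the same route as the paper: identify $P_0$ from the top arcs via the decomposition \eqref{formula:top and soc 1}, then read off $\ker p_0$ from the standard zigzag form of the string of $\MM(c)$, with each interior socle arc contributing an anti-diagonal copy of $\MM(c_{\proj}(a_{\gbullet}))$ and the two ends contributing $Q_L$ and $Q_R$ (the paper organizes the endpoint discussion into four cases according to whether $\agreen{c}{1}$ and $\agreen{c}{m(c)}$ are socle arcs, which is precisely the case analysis you flag as the main obstacle). The only cosmetic imprecision is the phrase ``descend past $a_{\gbullet}$'': for hereditary type $\A_n$ or $\tA_n$ an interior socle vertex is a sink of the quiver, so the adjacent projectives stop exactly at the valley and the identified copy is the simple projective there, which does not affect your conclusion.
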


\begin{proof}
Let $v_i$ ($1\le i\le m(c)$) be the vertex corresponding to the $\gbullet$-arc $\agreen{c}{i}$ in $\Q(\SURF^{\F})$.
Then, by Theorem \ref{thm:OPS and BCS corresponding} (1), $\MM(c)$ is a string module decided by the following string (i.e., a path of the underlying graph of $\Q$ without relations, whose arrows are decided by $c$ and $\gbullet$-FFAS):
\begin{align}\label{formula:lemm-proj cover}
v_1 \longline v_2 \longline \cdots \longline v_{m(c)-1} \longline v_{m(c)}
\end{align}
Let $v_i^{\top}$ (resp., $v_i^{\soc}$) be the vertex corresponding to the $\gbullet$-arc $\agreen{c}{i} \in \top(c)$ (resp., $\in \soc(c)$).
Then, by (\ref{formula:top and soc 1}), we obtain the projective cover of $\MM(c)$ is $p_0: P_0 \to \MM(c)$, where
\[ P_0 \cong \bigoplus_{\agreen{c}{i} \in \top(c)} P(v_i^{\top}) \cong \bigoplus_{\agreen{c}{i} \in \top(c)} \MM(c_{\proj}(\agreen{c}{i})). \]
Generally, there are four cases for the string (\ref{formula:lemm-proj cover}):
\begin{itemize}
  \item[(\textbf{a})] $\agreen{c}{1}$ is a socle $\gbullet$-arc of $c$, but $\agreen{c}{m(c)}$ is not;
  \item[(\textbf{b})] $\agreen{c}{m(c)}$ is a socle $\gbullet$-arc of $c$, but $\agreen{c}{1}$ is not;
  \item[(\textbf{c})] $\agreen{c}{1}$ and $\agreen{c}{m(c)}$ are socle $\gbullet$-arcs of $c$;
  \item[(\textbf{d})] $\agreen{c}{1}$ and $\agreen{c}{m(c)}$ are not socle $\gbullet$-arcs of $c$.
\end{itemize}
For the case (\textbf{a}), without loss of generality, we assume that the string (\ref{formula:lemm-proj cover}) is of the form
\[\xymatrix{
& v^{\top}_{j_1} \ar@{~>}[ld] \ar@{~>}[rd] & & \ar@{~>}[ld] \cdots \ar@{~>}[rd] & & v^{\top}_{j_{\theta}} \ar@{~>}[ld] \\
v^{\soc}_{i_1} & & v^{\soc}_{i_2} & \cdots & v^{\soc}_{i_{\theta}} &
}\]
where $v^{\soc}_{i_1}=v_1$, $v^{\top}_{j_\theta}=v_{m(c)}$ and ``$\xymatrix{v \ar@{~>}[r]& v'}$'' represents a path from $v$ to $v'$ in $\Q$. Since $A$ is hereditary gentle, we have
\[\widehat{\soc(c)} = \{\agreen{c}{i_t} \mid 2\le t\le \theta\} \]
and
\[\ker p_0 = P(x) \oplus \Big(\bigoplus\limits_{a_{\gbullet}\in \widehat{\soc(c)}} \MM(c_{\proj}(a_{\gbullet}))\Big) \oplus P(y),\]
where $x$ is a sink of the arrow whose source is $v_1$ and $y$ ia a sink of the arrow whose source is $v_{m(c)}$.
Then $x$ and $y$ are vertices corresponding to $\overleftarrow{\agreen{c}{1}}$ and $\overleftarrow{\agreen{c}{m(c)}}$, respectively. Thus
\[ P(x) \cong \MM(c_{\proj}(\overleftarrow{\agreen{c}{1}})) \ \text{and }\
   P(y) \cong \MM(c_{\proj}(\overleftarrow{\agreen{c}{m(c)}})). \]
The proof for cases (\textbf{b}), (\textbf{c}) and (\textbf{d}) are similar to (\textbf{a}).
\end{proof}

\begin{remark}\rm
Since $A$ is a gentle algebra, we have $Q$ is projective. Moreover, $A$ is hereditary, we obtain that $Q_L$ and $Q_R$ are projective.
\end{remark}

\begin{corollary} \label{coro:proj resolution}
The projective representation of arbitrary string module $\MM(c)$ in $\modcat A$ is of the form
\[ 0 \longrightarrow Q_L\oplus Q\oplus Q_R \longrightarrow
\bigoplus_{a_{\gbullet}\in\top(c)}\MM(c_{\proj}(a_{\gbullet}))  \longrightarrow \MM(c) \longrightarrow 0. \]
\end{corollary}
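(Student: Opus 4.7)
The plan is that this corollary is essentially a direct packaging of Lemma~\ref{lemm:proj cover} together with the hereditariness of $A$. First, I would invoke Lemma~\ref{lemm:proj cover} to obtain the short exact sequence
\[ 0 \longrightarrow \ker p_0 \longrightarrow P_0 = \bigoplus_{a_{\gbullet}\in\top(c)}\MM(c_{\proj}(a_{\gbullet})) \xrightarrow{\ p_0\ } \MM(c) \longrightarrow 0, \]
where $p_0$ is the projective cover of the string module $\MM(c)$ and the identification $\ker p_0 \cong Q_L\oplus Q\oplus Q_R$ has already been established.

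Second, I would argue that $Q_L\oplus Q\oplus Q_R$ is projective. This is precisely the content of the remark following Lemma~\ref{lemm:proj cover}: the summand $Q$ is a direct sum of indecomposable projectives of the form $\MM(c_{\proj}(a_{\gbullet}))$ by the gentle hypothesis on $A$, and the boundary summands $Q_L\cong \MM(c_{\proj}(\overleftarrow{\agreen{c}{1}}))$ and $Q_R\cong \MM(c_{\proj}(\overleftarrow{\agreen{c}{m(c)}}))$ are projective because $A$ is hereditary. Alternatively, one can bypass a case analysis by noting that for a hereditary algebra every submodule of a projective module is projective, so $\ker p_0 \hookrightarrow P_0$ is automatically projective.

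Combining these two observations, the sequence in Lemma~\ref{lemm:proj cover} is itself a projective resolution of $\MM(c)$, which is exactly the statement of Corollary~\ref{coro:proj resolution}. In particular, since $p_0$ is the projective cover, this resolution is the minimal one. There is no real obstacle here; the only thing to be slightly careful about is handling the trivial cases where $\overleftarrow{\agreen{c}{1}}=\varnothing$ or $\overleftarrow{\agreen{c}{m(c)}}=\varnothing$, in which the corresponding summands $Q_L$ or $Q_R$ vanish and the sequence remains valid.
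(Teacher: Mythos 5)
Your proposal is correct and follows essentially the same route as the paper, which derives the corollary immediately from Lemma~\ref{lemm:proj cover} together with the remark that $Q$ is projective (by gentleness) and $Q_L$, $Q_R$ are projective (by hereditariness). Your additional observation that one could instead invoke the general fact that submodules of projectives over a hereditary algebra are projective is a harmless simplification of the same argument.
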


\subsection{The admissible curves corresponding to string complexes} \label{subsect:curves corresp. string comp.}

In this section, we study the embedding of curves from module categories to derived categories of hereditary gentle algebras.

First, we recall the definition of homotopy string for gentle algebras. Denote by $\alpha^-$ the {\defines formal inverse} of arrow $\alpha\in\Q_1$ and $\Q_1^-$ the set of all formal inverses of arrows.
Then, $s(\alpha^-)=t(\alpha)$ and $t(\alpha^-)=s(\alpha)$.
A {\defines direct $($resp., inverse$)$ homotopy letter} (of length $l$) is a sequence $\ell=u_1u_2\cdots u_l$,
where $t(u_i)=s(u_{i+1})$ for all $1\le i<l$ and all $u_i$ are elements in $\Q_1$ (resp., $\Q_1^-$).
A {\defines homotopy string} (of length $n$) is a sequence $s = \ell_1\cdots\ell_m$ ($l_1+\cdots+l_m=n$), where:
\begin{itemize}
  \item For any $1\le j\le m$, $\ell_j = u_{j,1}\cdots u_{j,l_j}$ is a homotopy letter.
  \item For any $1\le j < m$, $t(u_{j,l_j})=s(u_{j+1, 1})$.
  \item For arbitrary two homotopy letters $\ell_j$ and $\ell_{j+1}$, one of the following conditions holds:
    \begin{itemize}
     \item If $u_{j,l_j}\in \Q_1$ (resp., $\Q_1^-$), $u_{j+1,1}\in\Q_1^-$ (resp., $\Q_1$), then $u_{j,l_j}\ne u_{j+1,1}^-$;
     \item If $u_{j,l_j}$ and $u_{j+1,1}$ are elements in $\Q_1$ (resp., $\Q_1^-$) such that $t(u_{j,l_j}) = s(u_{j+1,1})$, then $u_{j,l_j}u_{j+1,1}\in \I$ (resp., $u_{j+1,1}^{-}u_{j,l_j}^{-}\in \I$).
    \end{itemize}
\end{itemize}
We say two homotopy strings $s$ and $s'$ are {\defines equivalence} if $s=s'$ or $s=s'^{-}$. There is a bijection between the homotopy strings and the indecomposable object in $\per A$ \cite[Theorem 2]{BM2003}.

For any permissible curve $c$ in $\PC(\SURF^{\F_A}(A))$, the end segments $c_{(0,1)}$ and $c_{(m(c), m(c)+1)}$ have the following three forms in Figure \ref{fig:endsegment}.

\begin{figure}[htbp]
\definecolor{ffqqqq}{rgb}{1,0,0}
\definecolor{qqwuqq}{rgb}{0,0.5,0}
\begin{tikzpicture}
\draw[black] (-2,-1) node[below]{$\bSurf$};
\draw[black][<-][line width=1.2pt] (-2, 1) -- ( 2, 1);
\draw[black][->][line width=1.2pt] (-2,-1) -- ( 2,-1);
\draw[qqwuqq] ( 0,-1)--( 0, 1) [line width=1pt];
\fill[qqwuqq] ( 0,-1) circle (0.1cm);
\fill[qqwuqq] ( 0, 1) circle (0.1cm);
\draw[qqwuqq] ( 0,-1)--(-1, 1) [line width=1pt];
\fill[qqwuqq] (-1, 1) circle (0.1cm);
\draw (0,-0.25) node[right]{$\agreen{c}{1}$};
\draw (-0.25,-0.25) node[left]{$a_{\gbullet}=\overleftarrow{\agreen{c}{1}}$};
\draw [red] (-0.5,1) to[out= -45, in= 180] ( 0.5, 0.5) [line width=1pt];
\draw [red] (-0.5,1) to[out=-135, in=  45] (-1.3, 0.2) [line width=1pt];
\fill [red] (-0.5,1) circle (2.5pt); \fill [white] (-0.5,1) circle (1.8pt);
\draw (-1.3, 0.2) node[left]{$\ta_{\rbullet}$};
\draw[blue][line width=1pt] (-1,1) to[out=-45,in=180] (2,0);
\draw[blue] (2,0) node[above]{$c$};
\draw (0,-1.5) node{Case $\bfI$};
\end{tikzpicture}
\ \
\begin{tikzpicture}
\draw[black] (-2,-1) node[below]{$\bSurf$};
\draw[black][<-][line width=1.2pt] (-2, 1) -- ( 2, 1);
\draw[black][->][line width=1.2pt] (-2,-1) -- ( 2,-1);
\draw[qqwuqq] ( 0,-1)--( 0, 1) [line width=1pt];
\fill[qqwuqq] ( 0,-1) circle (0.1cm);
\fill[qqwuqq] ( 0, 1) circle (0.1cm);
\draw[qqwuqq] ( 0, 1)--(-1,-1) [line width=1pt];
\fill[qqwuqq] (-1,-1) circle (0.1cm);
\draw (0, 0.25) node[right]{$\agreen{c}{1}$};
\draw (-0.25,0.25) node[left]{$a_{\gbullet}$};
\draw [red] (-0.5,-1) to[out=  45, in= 180] ( 0.5,-0.5) [line width=1pt];
\draw [red] (-0.5,-1) to[out= 135, in= -45] (-1.3,-0.2) [line width=1pt];
\fill [red] (-0.5,-1) circle (2.5pt); \fill [white] (-0.5,-1) circle (1.8pt);
\draw (-1.3,-0.2) node[left]{$\ta_{\rbullet}$};
\draw[blue][line width=1pt] (-1,-1) to[out=45,in=180] (2,0);
\draw[blue] (2,0) node[below]{$c$};
\draw (0,-1.5) node{Case $\bfII$};
\end{tikzpicture}
\ \
\begin{tikzpicture}
\draw[black] (-1,-1) node[below]{$\bSurf$};
\draw[black][->][line width=1.2pt] (2,1) -- (0,1) to[out=180,in=90] (-1,0) to[out=-90,in=180] (0,-1) -- (2,-1);
\draw[qqwuqq] ( 0,-1)--( 0, 1) [line width=1pt];
\fill[qqwuqq] ( 0,-1) circle (0.1cm);
\fill[qqwuqq] ( 0, 1) circle (0.1cm);
\draw [red] (-1,0) -- (2,0) [line width=1pt];
\fill [red] (-1,0) circle (2.5pt); \fill [white] (-1,0) circle (1.8pt);
\draw[blue][line width=1pt] (-1,0) to[out=45,in=180] (2,-0.2);
\draw[blue] (1.9,-0.6) node[above]{$c$};
\draw (0.5,-1.5) node{Case $\bfIII$};
\end{tikzpicture}
\caption{The end segments of permissible curves corresponding to string modules. }
\label{fig:endsegment}
\end{figure}

Denote by $c^{\circlearrowleft}$ the curve obtained by the following steps:
\begin{itemize}
  \item[Step 1] If $c_{(0,1)}$ (resp., $c_{(m(c),m(c)+1)}$) is of the form of Case $\bfII$ or Case $\bfIII$,
    then move $c(0)$ (resp., $c(1)$) to the next $\gbullet$-marked point along the positive direction of $\bSurf$;
    otherwise, fix $c_{(0,1)}$ (resp., $c_{(m(c),m(c)+1)}$).
  \item[Step 2] Fixing all segments $c_{(i,i+1)}$ of $c$, where $1\le i\le m(c)-1$.
\end{itemize}
Then, we have

\begin{proposition}\label{prop:curve change}
Let $M$ be a string module in $\modcat A$ and $\bfP(M)$ be the projective representation of $M$. Let $c_M$ be the permissible curve in $\PC(\SURF^{\F_A}(A))$ such that $\MM(c_M)\cong M$ and $\tc_{\bfP(M)}$ be the admissible curve in $\AC(\SURF^{\F_A}(A))$ such that $\X(\tc_{\bfP(M)})\cong \bfP(M)$. Then $c_{\bfP(M)}$ is homotopic to $c_M^{\circlearrowleft}$ up to fixing endpoints.
\end{proposition}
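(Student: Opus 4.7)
The strategy is to compute $\tc_{\bfP(M)}$ explicitly from the projective resolution supplied by Corollary \ref{coro:proj resolution} and then verify that, as an unparametrized graded curve, it coincides with $c_M^{\circlearrowleft}$ up to fixing endpoints. By Corollary \ref{coro:proj resolution}, $\bfP(M)$ is the two-term complex
$$Q_L\oplus Q\oplus Q_R \xrightarrow{\; d \;} \bigoplus_{a_{\gbullet}\in\top(c_M)}\MM(c_{\proj}(a_{\gbullet})),$$
whose indecomposable summands are themselves $\MM$-images of explicit projective curves $c_{\proj}(\cdot)$. Under the OPS bijection $\X$ of Theorem \ref{thm:OPS and BCS corresponding}(2), such a string complex corresponds to a single indecomposable admissible curve $\tc_{\bfP(M)}$ whose trajectory is obtained by stitching these projective curves together at their shared $\gbullet$-marked endpoints in the order dictated by the differential $d$.

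The first step is to match the interior of $\tc_{\bfP(M)}$ with that of $c_M$. The top arcs $\agreen{c_M}{i}\in\top(c_M)$ partition the crossing sequence $\agreen{c_M}{1},\ldots,\agreen{c_M}{m(c_M)}$ into maximal socle-free sub-sequences, each of which is precisely the crossing sequence of the projective curve $c_{\proj}(\agreen{c_M}{i})$ attached to that top arc. The differential $d$, whose components come from the socle projectives $Q_L$, $Q$, $Q_R$, glues two consecutive projective curves at a shared socle $\gbullet$-arc. Consequently the concatenated admissible curve crosses $\agreen{c_M}{1},\ldots,\agreen{c_M}{m(c_M)}$ in the same order as $c_M$, which determines $\tc_{\bfP(M)}$ in $\innerSurf$ up to its two endpoints.

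The second step is to analyse the endpoints case-by-case following Figure \ref{fig:endsegment}. In Case $\bfI$ the arc $\agreen{c_M}{1}$ is a top arc, the projective curve $c_{\proj}(\agreen{c_M}{1})$ already ends at $c_M(0)\in\M$, and $\circlearrowleft$ fixes this endpoint. In Case $\bfII$, $\agreen{c_M}{1}$ is a socle arc and the relevant end-contribution is $Q_L=\MM(c_{\proj}(\overleftarrow{\agreen{c_M}{1}}))$ of Lemma \ref{lemm:proj cover}; by the very definition of $c_{\proj}(\overleftarrow{\agreen{c_M}{1}})$, this curve begins at the $\gbullet$-marked point adjacent to the extra point $c_M(0)\in\E$ along the positive direction of $\bSurf$, which is exactly the target of the $\circlearrowleft$ shift. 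Case $\bfIII$ is handled in the same way, with $Q_L$ again responsible for transporting the endpoint to the positively-next $\gbullet$-marked point (here the local picture forces the projective curve to travel once around the digon). The symmetric analysis at $t=1$ using $Q_R$ and $\agreen{c_M}{m(c_M)}$ takes care of the other end.

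The principal obstacle will be ensuring compatibility of \emph{gradings}, since the OPS correspondence depends on $\tc_{\bfP(M)}$ and not merely on its underlying curve. Because $A$ is hereditary gentle, Construction \ref{construction} forces $|\alpha|=0$ for every arrow $\alpha$ of $\Q$, equivalently $\ii_{\ta^1_{\rbullet}\cap\ta^2_{\rbullet}}(\ta^1_{\rbullet},\ta^2_{\rbullet})=1$ at each internal crossing produced by the differential $d$. Consequently the intersection index contributions that the OPS dictionary reads off along $\tc_{\bfP(M)}$ agree with those read off along the canonical tangent-lift grading of $c_M^{\circlearrowleft}$, so the two graded curves carry the same grading. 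Combining this grading check with the interior matching and endpoint analysis above yields the desired homotopy between $c_{\bfP(M)}$ and $c_M^{\circlearrowleft}$.
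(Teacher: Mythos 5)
Your overall strategy---read $\bfP(M)$ off Corollary \ref{coro:proj resolution}, translate it into an admissible curve via $\X$, and check the two end segments case by case against Figure \ref{fig:endsegment}---is the same as the paper's. However, the decisive step, the endpoint analysis, is carried out with the mechanism inverted, and this is a genuine gap. In Cases $\bfII$ and $\bfIII$ the whole point is that $\overleftarrow{\agreen{c_M}{1}}=\varnothing$, so $Q_L=\MM(c_{\proj}(\overleftarrow{\agreen{c_M}{1}}))=0$: the endpoint moves to the next $\gbullet$-marked point precisely because there is \emph{no} degree~$-1$ summand attached to the left end, so the admissible curve need not reach back to cross the $\rbullet$-arc dual to any arc left of $\agreen{c_M}{1}$. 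You instead invoke a \emph{nonzero} $Q_L$ as the agent ``transporting'' the endpoint and describe where $c_{\proj}(\overleftarrow{\agreen{c_M}{1}})$ begins---but that curve is trivial here; moreover in Case $\bfII$ one has $c_M(0)\in\M$, not $\E$ (only Case $\bfIII$ starts at an extra marked point; you appear to have conflated the end-segment Cases $\bfI$--$\bfIII$ with the endpoint Types 1--3). Symmetrically, in Case $\bfI$ one has $\overleftarrow{\agreen{c_M}{1}}=a_{\gbullet}\neq\varnothing$, so $Q_L\cong P(\mathfrak{v}(a_{\gbullet}))$ is a nonzero degree~$-1$ summand which the admissible curve must record by crossing the $\rbullet$-arc $\ta_{\rbullet}$ dual to $a_{\gbullet}$ near the (fixed) endpoint; your Case $\bfI$ ignores $Q_L$ entirely, and the claim that $\agreen{c_M}{1}$ is automatically a top arc with $c_{\proj}(\agreen{c_M}{1})$ ending at $c_M(0)$ is unjustified---Case $\bfI$ constrains only the position of $a_{\gbullet}$ relative to $\agreen{c_M}{1}$, not of $\agreen{c_M}{2}$, and in this case $c_{\proj}(\agreen{c_M}{1})$ continues past $a_{\gbullet}$. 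Since the correspondence between the vanishing of $Q_L$ (resp.\ $Q_R$) and the $\circlearrowleft$-shift is exactly what the proposition asserts, getting it backwards cannot be repaired by rewording.

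Two smaller points. A maximal socle-free subsequence of $\agreen{c_M}{1},\dots,\agreen{c_M}{m(c_M)}$ is in general a proper subsequence of the crossing sequence of $c_{\proj}(\agreen{c_M}{i})$, since the projective curve continues past the socle arcs of $c_M$ to the ends of the maximal paths; what the admissible curve must reproduce is the pattern of crossings with the dual $\rbullet$-arcs encoding the degree $0$ and $-1$ summands, not the $\gbullet$-crossing pattern of the individual projective curves. Also, $|\alpha|=0$ for all arrows is a \emph{choice} of foliation and gradings made in Section~2, not forced by heredity; with that choice made once and for all, no separate grading verification is needed, as the gradings of the end $\rbullet$-arcs are read off directly from the local pictures.
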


\begin{proof} We only prove the above three cases for the end segment $c_{(0,1)}$ of $c_M$. The proof of the end segment $c_{(m(c_M),m(c_M)+1)}$ of $c_M$ is similar.

(1) If the end segment $c_{(0,1)}$ of $c_M$ is of the form of the Case $\bfI$, then the string corresponding to $M$ is the shadow part shown in the following
\begin{center}
\begin{tikzpicture}
\draw (0,0) node{\xymatrix{
\mathfrak{v}(a_{\gbullet}) & \mathfrak{v}(\agreen{c_M}{1}) \ar[l] \ar@{-}[r] & \cdots}};
\draw[pattern color=orange, pattern=north west lines][opacity=0.5]
(-0.5,-0.5) -- (-0.5,0.5) -- (2.5, 0.5) -- (2.5,-0.5) -- cycle;
\end{tikzpicture}
\end{center}
In this case, we have $\overleftarrow{\agreen{c_M}{1}}=a_{\gbullet}$.
Thus, by Corollary \ref{coro:proj resolution}, the projective representation of $M$ satisfies that the indecomposable projective module $\MM(c_{\proj}(\overleftarrow{\agreen{c_M}{1}}))$ is isomorphic to $P(\mathfrak{v}(a_{\gbullet}))$.
Then, we can draw the curves $c_M$ and $c_{\bfP(M)}$ as follows, where the end segment $\tc_{[0,1]}$ of $\tc_{\bfP(M)}$ crosses the graded $\rbullet$-arc $\ta_{\rbullet}$ which intersects with $a_{\gbullet}$.

\begin{figure}[H]
\definecolor{ffqqqq}{rgb}{1,0,0}
\definecolor{qqwuqq}{rgb}{0,0.5,0}
\begin{tikzpicture}[scale=1.5]
\draw[black] (-2,-1) node[below]{$\bSurf$};
\draw[black][<-][line width=1.2pt] (-2, 1) -- ( 2, 1);
\draw[black][->][line width=1.2pt] (-2,-1) -- ( 2,-1);
\draw[qqwuqq] ( 0,-1)--( 0, 1) [line width=1pt];
\fill[qqwuqq] ( 0,-1) circle (0.1*0.66cm);
\fill[qqwuqq] ( 0, 1) circle (0.1*0.66cm);
\draw[qqwuqq] ( 0,-1)--(-1, 1) [line width=1pt];
\fill[qqwuqq] (-1, 1) circle (0.1*0.66cm);
\draw (0,-0.25) node[right]{$\agreen{c_M}{1}$};
\draw (-0.25,-0.25) node[left]{$a_{\gbullet}=\overleftarrow{\agreen{c_M}{1}}$};
\draw [red] (-0.5,1) to[out= -45, in= 180] ( 0.3, 0.50) [line width=1pt];
\draw [red] ( 0.3, 0.50) -- ( 0.5, 0.5) to [out=0,in=-135] (1.5,1) [line width=1pt][dash pattern=on 2pt off 2pt];
\draw [red] (1.5,1) -- (1.2,-1) [line width=1pt];
\draw [red] (-0.5,1) to[out=-135, in=  45] (-1.30, 0.20) [line width=1pt];
\fill [red] (-0.50, 1.00) circle (2.5*0.66pt); \fill [white] (-0.50, 1.00) circle (1.8*0.66pt);
\fill [red] ( 1.50, 1.00) circle (2.5*0.66pt); \fill [white] ( 1.50, 1.00) circle (1.8*0.66pt);
\fill [red] ( 1.20,-1.00) circle (2.5*0.66pt); \fill [white] ( 1.20,-1.00) circle (1.8*0.66pt);
\draw (-1.3, 0.2) node[left]{$\ta_{\rbullet} = \ared{\tc_{\bfP(M)}}{1}$};
\draw[blue][line width=1pt][->] (-1,1) to[out=-45,in=180] (2,0);
\draw[blue] (2,0) node[above]{$c_M$};
\draw[orange][line width=1pt][->] (-1,1) to[out=-50,in=180] (2,-0.2);
\draw[orange] (2,-0.2) node[below]{$\tc_{\bfP(M)}$};
\end{tikzpicture}
\caption{Case $\mathbf{I}$}
\label{fig:pf in prop:curve change I}
\end{figure}

(2) If the end segment $c_{(0,1)}$ of $c_M$ is of the form of the Case $\bfII$, then the string corresponding to $M$ is the shadow part shown in the following:
\begin{center}
\begin{tikzpicture}
\draw (0,0) node{\xymatrix{
\mathfrak{v}(a_{\gbullet}) \ar[r] & \mathfrak{v}(\agreen{c_M}{1}) \ar@{-}[r] & \cdots}};
\draw[pattern color=orange, pattern=north west lines][opacity=0.5]
(-0.5,-0.5) -- (-0.5,0.5) -- (2.5, 0.5) -- (2.5,-0.5) -- cycle;
\end{tikzpicture}
\end{center}
In this case, the indecomposable projective module $\MM(c_{\proj}(\overleftarrow{\agreen{c_M}{1}}))$ is zero. Then, by Corollary \ref{coro:proj resolution}, we can draw the curves $c_M$ and $c_{\bfP(M)}$ as follows (we have two cases (2.1) and (2.2)), where the end segment $\tc_{[0,1]}$ of $\tc_{\bfP(M)}$ crosses no $\ta_{\rbullet}$ which intersects with $a_{\gbullet}$, but it crosses the graded $\rbullet$-arc which intersects with $\agreen{c_M}{1}$.
\begin{figure}[htbp]
\definecolor{ffqqqq}{rgb}{1,0,0}
\definecolor{qqwuqq}{rgb}{0,0.5,0}
\begin{tikzpicture}[scale=1.5]
\draw[black] (-2,-1) node[below]{$\bSurf$};
\draw[black][<-][line width=1.2pt] (-2, 1) -- ( 2, 1);
\draw[black][->][line width=1.2pt] (-2,-1) -- ( 2,-1);
\draw[qqwuqq] ( 0,-1)--( 0, 1) [line width=1pt];
\fill[qqwuqq] ( 0,-1) circle (0.1*0.66cm);
\fill[qqwuqq] ( 0, 1) circle (0.1*0.66cm);
\draw[qqwuqq] ( 0, 1)--(-1,-1) [line width=1pt];
\fill[qqwuqq] (-1,-1) circle (0.1*0.66cm);
\draw[qqwuqq] ( 0, 1)--( 1,-1) [line width=1pt];
\fill[qqwuqq] ( 1,-1) circle (0.1*0.66cm);
\draw (-0.3, 0.25) node[left]{$a_{\gbullet}$};
\draw ( 0  , 0.25) node{$\agreen{c_M}{1}$};
\draw ( 0.3, 0.25) node[right]{$\agreen{c_M}{2}$};
\draw [red] (-0.5,-1) to[out=  45, in= 135] ( 0.5,-1) [line width=1pt];
\draw [red] (-0.5,-1) to[out= 135, in= -45] (-1.3,-0.2) [line width=1pt];
\draw [red] ( 0.5,-1) -- (1.5,1) [line width=1pt];
\fill [red] (-0.5,-1) circle (2.5*0.66pt); \fill [white] (-0.5,-1) circle (1.8*0.66pt);
\fill [red] ( 0.5,-1) circle (2.5*0.66pt); \fill [white] ( 0.5,-1) circle (1.8*0.66pt);
\fill [red] ( 1.5, 1) circle (2.5*0.66pt); \fill [white] ( 1.5, 1) circle (1.8*0.66pt);
\draw (-1.3,-0.2) node[left]{$\ta_{\rbullet}$};
\draw [red][->] (-0.25,-0.85) -- (-0.25,-0.7) -- (-1.3,-0.7);
\draw (-1.3,-0.7) node[left]{$\ared{\tc_{\bfP(M)}}{1}$};
\draw[blue][->][line width=1pt] (-1,-1) to[out=45,in=180] (2,0);
\draw[blue] (2,0) node[above]{$c_M$};
\draw[orange][->][line width=1pt] (0,-1) to[out=45,in=180] (2,-0.2);
\draw[orange] (2,-0.2) node[below]{$\tc_{\bfP(M)}$};
\draw (0,-1.5) node{(2.1)};
\end{tikzpicture}
\ \
\begin{tikzpicture}[scale=1.5]
\draw[black] (-2,-1) node[below]{$\bSurf$};
\draw[black][<-][line width=1.2pt] (-2, 1) -- ( 2, 1);
\draw[black][->][line width=1.2pt] (-2,-1) -- ( 2,-1);
\draw[qqwuqq] ( 0,-1)--( 0, 1) [line width=1pt];
\fill[qqwuqq] ( 0,-1) circle (0.1*0.66cm);
\fill[qqwuqq] ( 0, 1) circle (0.1*0.66cm);
\draw[qqwuqq] ( 0, 1)--(-1,-1) [line width=1pt];
\fill[qqwuqq] (-1,-1) circle (0.1*0.66cm);
\draw[qqwuqq] ( 0,-1)--( 1, 1) [line width=1pt];
\fill[qqwuqq] ( 1, 1) circle (0.1*0.66cm);
\draw (-0.3, 0.25) node[left]{$a_{\gbullet}$};
\draw ( 0  , 0.25) node{$\agreen{c_M}{1}$};
\draw ( 0.6, 0.25) node[right]{$\agreen{c_M}{2}$};
\draw [red] (-0.5,-1) to[out= 135, in= -45] (-1.3,-0.2) [line width=1pt];
\draw [red] (-0.5,-1) -- ( 0.5, 1) [line width=1pt];
\draw [red] ( 0.5, 1) to[out=-45,in=-180] ( 1,0.7) [line width=1pt];
\draw [red] ( 1, 0.7) to[out=  0,in=-135] (1.5, 1) [line width=1pt][dash pattern=on 2pt off 2pt];
\draw [red] ( 1.5, 1) -- ( 1.3,-1) [line width=1pt];
\fill [red] (-0.5,-1) circle (2.5*0.66pt); \fill [white] (-0.5,-1) circle (1.8*0.66pt);
\fill [red] ( 0.5, 1) circle (2.5*0.66pt); \fill [white] ( 0.5, 1) circle (1.8*0.66pt);
\fill [red] ( 1.5, 1) circle (2.5*0.66pt); \fill [white] ( 1.5, 1) circle (1.8*0.66pt);
\fill [red] ( 1.3,-1) circle (2.5*0.66pt); \fill [white] ( 1.3,-1) circle (1.8*0.66pt);
\draw (-1.3,-0.2) node[left]{$\ta_{\rbullet}$};
\draw[blue][->][line width=1pt] (-1,-1) to[out=45,in=180] (2,0);
\draw[blue] (2,0) node[above]{$c_M$};
\draw[orange][->][line width=1pt] (0,-1) to[out=45,in=180] (2,-0.2);
\draw[orange] (2,-0.2) node[below]{$\tc_{\bfP(M)}$};
\draw (0,-1.5) node{(2.2)};
\end{tikzpicture}
\caption{Case $\mathbf{II}$ }
\label{fig:pf in prop:curve change II}
\end{figure}

(3) The proof of Case $\bfIII$ is similar to (2).
\end{proof}

In fact, we have shown the following result.

\begin{theorem}\label{thm:curve embeding}
There is an injection
\[\cemb: \PC(\SURF^{\F_A}(A)) \to \AC(\SURF^{\F_A}(A)),\ c\mapsto \tc^{\circlearrowleft}\]
from $\PC(\SURF^{\F_A}(A))$ to $\AC(\SURF^{\F_A}(A))$ such that $\X(\tc^{\circlearrowleft}) \cong \bfP(\MM(c))$ in $\per A$.
\end{theorem}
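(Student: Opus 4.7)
The plan is to note that the bulk of the theorem has already been established in Proposition \ref{prop:curve change}: that result produces, for each string module $M=\MM(c)$ with corresponding permissible curve $c=c_M$, an admissible curve $\tc_{\bfP(M)}$ satisfying $\X(\tc_{\bfP(M)})\cong\bfP(M)$, and shows that $\tc_{\bfP(M)}$ is homotopic to $c_M^{\circlearrowleft}$ up to fixing endpoints. What remains is essentially a bookkeeping argument together with a verification of injectivity.

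First I would define the map $\cemb$ explicitly by $c\mapsto \tc^{\circlearrowleft}$, where the underlying curve $c^{\circlearrowleft}$ is obtained from $c$ via the two-step rule described just before Proposition \ref{prop:curve change}, and the grading on $\tc^{\circlearrowleft}$ is inherited from the canonical grading carried by the admissible curve associated to the projective representation. Well-definedness (that $\tc^{\circlearrowleft}$ is indeed an admissible curve whose endpoints lie in $\M$) follows from the three case analyses carried out in Cases $\bfI$, $\bfII$, $\bfIII$ of Proposition \ref{prop:curve change}, since in each case one verifies directly from Figures \ref{fig:pf in prop:curve change I} and \ref{fig:pf in prop:curve change II} that the resulting curve is graded and lies in $\AC_{\m}(\SURF^{\F_A}(A))$.

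Second, the identity $\X(\tc^{\circlearrowleft})\cong\bfP(\MM(c))$ in $\per A$ is now immediate: by Proposition \ref{prop:curve change} the curves $\tc^{\circlearrowleft}$ and $\tc_{\bfP(\MM(c))}$ are homotopic as graded curves (both endpoints are in $\M$ and the two graded curves agree outside the end segments while the end segments have been chosen compatibly), so they represent the same element of $\AC(\SURF^{\F_A}(A))$, and applying $\X$ yields the desired isomorphism.

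Finally, for injectivity, suppose $\cemb(c_1)=\cemb(c_2)$ in $\AC(\SURF^{\F_A}(A))$. Applying $\X$ and using the previous paragraph gives $\bfP(\MM(c_1))\cong\bfP(\MM(c_2))$ in $\per A$; since $\bfP(-)$ is a minimal projective resolution, taking $\H^0$ yields $\MM(c_1)\cong\MM(c_2)$ in $\modcat A$. By the bijection $\MM$ of Theorem \ref{thm:OPS and BCS corresponding}(1), this forces $c_1=c_2$ in $\PC(\SURF^{\F_A}(A))$. The main subtlety worth double-checking is that the Step 1 modification (moving endpoints to the next $\gbullet$-marked point along the positive direction of $\bSurf$) does not collapse two inequivalent permissible curves into the same admissible curve; this is guaranteed because the interior segments $c_{(i,i+1)}$ for $1\le i\le m(c)-1$ are fixed by the construction and they already record the isomorphism class of the string module, so the assignment $c\mapsto c^{\circlearrowleft}$ is reversible once $\MM(c)$ is known.
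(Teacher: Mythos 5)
Your proposal is correct and follows essentially the same route as the paper: the paper simply observes that Proposition \ref{prop:curve change} (the case analysis of the end segments in Cases $\bfI$, $\bfII$, $\bfIII$) already establishes the theorem, and your reduction to that proposition, together with the extra remark that injectivity follows by applying $\X$, taking $\H^0$ of the two-term complex $\bfP(\MM(c))$ to recover $\MM(c)$, and invoking the bijection of Theorem \ref{thm:OPS and BCS corresponding}(1), fills in exactly the bookkeeping the paper leaves implicit.
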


\section{The proof of Theorem \ref{thm2}.}\label{sect:main result}

In this section, we prove that there are no strictly shod algebras in hereditary gentle algebras. First, we recall some definitions.

\begin{definition} \rm
Let $A$ be a finite dimensional algebra and let $M$ be an $A$-module and $P$ a projective $A$-module.
\begin{itemize}
\item[(1)] $M$ is called {\it $\tau$-rigid} if $\Hom_{A}(M,\tau M)=0$ and
$M$ is called {\it $\tau$-tilting} if $M$ is $\tau$-rigid and $|M|=|A|$.
\item[(2)] The pair $(M,P)$ is called {\it $\tau$-rigid} if $M$ is $\tau$-rigid and $\Hom_{A}(P, M) = 0$.
\item[(3)] The pair $(M,P)$ is called {\it support $\tau$-tilting} if $(M,P)$ is $\tau$-rigid and $|M|+|P|=|A|$. In this case, $M$ is a support $\tau$-tilting module.
\end{itemize}
\end{definition}

Denote by $\mathrm{s\tau}$-$\mathrm{tilt}A$ the set of isomorphism classes of basic support $\tau$-tilting $A$-modules.

\begin{definition} \rm
\label{def:silting-objects}
Let $A$ be a finite dimensional algebra and $P$ be a complex in $K^{b}(\proj A)$.
 \begin{itemize}
\item[(1)] $P$ is called {\it presilting} if $\Hom_{K^{b}(\proj A)}(P,P[i])=0$ for $i>0$.
\item[(2)] $P$ is called {\it silting} if it is presilting and generates $K^{b}(\proj A)$ as a triangulated category.
\item[(3)] $P$ is called {\it 2-term} if it only has non-zero terms in degrees $-1$ and $0$.
\end{itemize}
\end{definition}

We denote by $\mathrm{2}$-$\mathrm{silt}A$ the set of isomorphism classes of basic 2-term silting complexes in $K^{b}(\proj A)$. The following \cite[Theorem 3.2]{AIR2014} gives the construction of 2-term silting complexes by support $\tau$-tilting modules.

\begin{theorem}\label{thm:bijection-between-silting-objects-and-support-tilting-modules}
There exists a bijection
\begin{center}
$\mathrm{2}$-$\mathrm{silt}A \leftrightarrow \mathrm{s\tau}$-$\mathrm{tilt}A$
\end{center}
given by $\mathrm{2}$-$\mathrm{silt}A\ni P \mapsto H^{0}(P) \in \mathrm{s}\tau$-$\mathrm{tilt}A$ and $\mathrm{s}\tau$-$\mathrm{tilt}A\ni (M,P) \mapsto(P_{1}\oplus P \xrightarrow {(f\ 0)} P_{0})\in\mathrm{2}$-$\mathrm{silt}A$, where $f: P_{1} \rightarrow P_{0}$ is a minimal projective
presentation of $M$.
\end{theorem}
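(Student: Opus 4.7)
\medskip

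\noindent\textbf{Proof proposal.} The plan is to construct the two maps explicitly and verify they are mutually inverse, with the key technical bridge being the Auslander--Reiten translate formula that interprets $\Hom$-vanishing in $K^{b}(\proj A)$ in degree $1$ as $\tau$-rigidity on the module side. First I would set up a canonical normal form for a 2-term silting complex $T\in K^b(\proj A)$: decompose $T\cong T'\oplus Q[1]$, where $Q[1]$ collects the indecomposable summands of $T$ concentrated in degree $-1$, and write $T'=(P_1\xrightarrow{f}P_0)$ with $f$ radical (so that $P_1\to P_0\to M\to 0$ is a minimal projective presentation of $M:=H^0(T')=H^0(T)$). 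The assignment is then $T\mapsto (M,Q)$; conversely, for a support $\tau$-tilting pair $(M,P)$ with minimal projective presentation $P_1\xrightarrow{f}P_0\to M\to 0$, we form $(P_1\oplus P\xrightarrow{(f,\,0)}P_0)$.

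Next I would translate the silting conditions into module-theoretic ones. For any two modules $M,N$ with minimal projective presentations $P_{\bullet}^M=(P_1^M\to P_0^M)$ and $P_{\bullet}^N=(P_1^N\to P_0^N)$, one has a natural isomorphism
\[
\Hom_{K^{b}(\proj A)}\bigl(P_{\bullet}^M,\,P_{\bullet}^N[1]\bigr)\;\cong\;\Hom_A(M,\tau N),
\]
which is the standard consequence of the defining exact sequence for $\tau$. Applying this with $T'=P_{\bullet}^M$ gives $\Hom_{K^b}(T',T'[1])=0 \Leftrightarrow \Hom_A(M,\tau M)=0$, i.e.\ $M$ is $\tau$-rigid. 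The mixed vanishing $\Hom_{K^b}(Q[1],T'[1])=\Hom_A(Q,M)=0$ is immediate from the exactness of $\Hom(Q,-)$ on projectives, and $\Hom_{K^b}(T',Q[2])=0$ is automatic since $Q[2]$ lives in degrees $\le -2$. Thus the presilting condition for $T$ is equivalent to $(M,Q)$ being a $\tau$-rigid pair.

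Then I would handle the size/support condition. Since $T$ silting means it generates $K^{b}(\proj A)$ as a triangulated category, a standard Grothendieck-group count gives $|T|=|T'|+|Q|=\mathrm{rk}\,K_0(A)=|A|$. The indecomposable summands of $T'$ (which by construction have nonzero degree-$0$ part) correspond bijectively to the indecomposable summands of $M=H^0(T')$ via the minimal projective presentation, so $|T'|=|M|$ and therefore $|M|+|Q|=|A|$, i.e.\ $(M,Q)$ is support $\tau$-tilting. For the reverse direction one runs the translation backwards to see that the constructed complex is presilting, and the counting $|M|+|P|=|A|$ together with $T$ being concentrated in two degrees and having the right rank ensures it generates $K^{b}(\proj A)$.

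Finally I would verify that the two assignments are mutually inverse: starting from $T=T'\oplus Q[1]$ in normal form, the minimal projective presentation of $M=H^0(T')$ recovers $T'$ up to isomorphism in $K^{b}(\proj A)$ by minimality of $f$, and the $Q$-summand is literally preserved; conversely, starting from $(M,P)$, the normal-form decomposition of the constructed complex returns the same pair. The main obstacle, and the only nontrivial input, is the Hom--$\tau$ isomorphism above: one must be careful that \emph{minimal} projective presentations are used, so that no phantom summand of the form $(P'\xrightarrow{\mathrm{id}}P')$ is absorbed or produced, and that the equivalence is compatible with direct sums of presentations. Once that compatibility is in place, everything else reduces to the bookkeeping sketched above.
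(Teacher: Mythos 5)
The paper offers no proof of this statement: it is quoted verbatim from Adachi--Iyama--Reiten \cite[Theorem 3.2]{AIR2014}, so there is nothing in-paper to compare against. Your sketch reconstructs the standard AIR argument along the expected lines (normal form $T\cong T'\oplus Q[1]$, translation of the presilting condition into $\tau$-rigidity of the pair, counting of indecomposable summands), and the overall architecture is right. Two points need attention.

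First, your ``key technical bridge'' is misstated. The correct statement is
\[
\Hom_{K^{b}(\proj A)}\bigl(P_{\bullet}^{M},\,P_{\bullet}^{N}[1]\bigr)\;\cong\;D\Hom_{A}\bigl(N,\tau M\bigr),
\]
i.e.\ the variance is opposite to what you wrote and a vector-space duality $D$ appears. As stated, your formula is false: if $M$ is projective then $P_{\bullet}^{M}$ is a stalk complex in degree $0$ and the left-hand side vanishes for every $N$, while $\Hom_{A}(M,\tau N)$ need not. This happens to be harmless for the theorem, because the only asymmetric instance you need (the $Q$ versus $T'$ computation, giving $\Hom_{A}(Q,M)=0$) you handle separately and correctly, and in the symmetric instance $M=N$ the vanishing of either version of the formula is equivalent to $\Hom_{A}(M,\tau M)=0$. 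Still, the lemma should be stated with the correct variance, since it is presented as the pivot of the whole argument.

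Second, the converse direction's assertion that ``presilting, $2$-term and $|T|=|A|$ ensures $T$ generates $K^{b}(\proj A)$'' is the genuinely nontrivial input and you dispose of it in half a sentence. It rests on two facts that must be invoked explicitly: every $2$-term presilting complex admits a Bongartz-type completion to a $2$-term silting complex, and all basic silting complexes in $K^{b}(\proj A)$ have exactly $|A|$ indecomposable summands. Without these, ``having the right rank'' does not by itself yield generation. With these two repairs the sketch matches the published proof of Adachi--Iyama--Reiten.
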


\begin{definition}\rm
\label{def:silted-algebras}
Let $Q$ be an acyclic quiver and $A=\kk Q$. We call an algebra $B$ {\it silted} of type $Q$ if there exists a 2-term silting complex $P$ over $A$ such that $B=\End_{K^{b}(\proj A)} (P)$.
\end{definition}

Next we recall the definition of strictly shod algebras in \cite{CL1999}.

\begin{definition}\rm
\label{def:strictly-shod-algebras}
An algebra $A$ is called {\it shod} (for small homological dimension) if for each indecomposable $A$-module $X$, either the projective or the injective dimension is at most one. $A$ is called {\it strictly shod} if it is shod and $\mathrm{gl.dim} A=3$.
\end{definition}

Note that tilted algebras are silted. Moreover, any silted algebras is shod. Thus, by the definition of strictly shod algebras, we only need to show that there are no silted algebras of global dimension 3 in hereditary gentle algebras. Recall that we have described the silted algebras of gentle algebras in \cite{LZ2022}. Let $A$ be a gentle algebra and $S$ be a complex of $\per A$. Then $S$ can induce a graded algebra $A^{S}$. In particular, if $S$ is a silting complex, we have
%
%
%
\begin{theorem}\label{thm-endo-alg}
Let $A$ be a gentle algebra and $S\in \per(A)$ a silting complex. Then $\H^0(A^S) \cong \End_{\per A}(S)$.
\end{theorem}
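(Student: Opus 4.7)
My plan is to unpack the construction of the graded algebra $A^{S}$ attached to a silting complex $S$ in terms of the geometric model, and to read off $\mathrm{H}^{0}(A^{S})$ directly as the degree-$0$ part of a graded morphism space. The framework is as follows. Write $S = \bigoplus_{i=1}^{r} S_{i}$ as a direct sum of indecomposable summands, and for each $i$ let $\tc_{i}$ denote the admissible graded curve in $\SURF^{\F_{A}}(A)$ such that $\X(\tc_{i}) \cong S_{i}$, via the bijection of Theorem \ref{thm:OPS and BCS corresponding}(2). The graded algebra $A^{S}$ is built from the multicurve $\{\tc_{i}\}_{i=1}^{r}$ by a construction analogous to Construction \ref{construction}: its vertices are the summands $S_{i}$ (one vertex per curve), its arrows are indexed by the oriented graded intersection points of the curves $\tc_{i}$ and $\tc_{j}$ in $\innerSurf$ together with the ``boundary'' arrows from common endpoints, and the degree of such an arrow is the intersection index $\ii_{p}(\tc_{i},\tc_{j})$.

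The first step is to identify, for each pair $(i,j)$, a basis of the graded morphism space $\bigoplus_{n\in\ZZ}\Hom_{\per A}(S_{i},S_{j}[n])$ with the set of graded intersection classes of $\tc_{i}$ and $\tc_{j}$; this is the content of the Opper--Plamondon--Schroll dictionary \cite{OPS2018}, which asserts that homogeneous morphisms of degree $n$ in $\per A$ between indecomposable summands are in bijection with intersection points of degree $n$ (both interior intersections and boundary intersections at common $\gbullet$-endpoints), and that composition is computed by concatenation of graded curve segments. The second step is to match this basis, term by term, with the generators of $A^{S}$: an arrow of degree $n$ in $A^{S}$ from $S_{i}$ to $S_{j}$ corresponds by construction to an intersection point of index $n$, and the multiplication in $A^{S}$ is defined to record exactly this concatenation up to the gentle-type relations at consecutive polygon edges, so that one has an isomorphism of graded vector spaces
\[
A^{S} \ \cong \ \bigoplus_{n\in\ZZ}\Hom_{\per A}\!\bigl(S,S[n]\bigr)
\]
that respects multiplication.

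Once this identification is in place, the theorem follows formally. Since $A^{S}$ has no differential as a graded algebra (it encodes only intersection data, not any further homological perturbation), its cohomology in degree zero coincides with its degree-$0$ component, and that component is precisely
\[
\Hom_{\per A}(S,S) \ = \ \End_{\per A}(S).
\]
Thus $\mathrm{H}^{0}(A^{S}) \cong \End_{\per A}(S)$ as desired.

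The main obstacle I anticipate is the careful bookkeeping in step two: one must verify that the defining relations of $A^{S}$ obtained from the elementary polygon combinatorics match the relations satisfied by the corresponding morphisms in $\per A$, and that the intersection index computed from the foliation $\F_{A}$ agrees with the cohomological shift. In particular one needs to check that zero products in $A^{S}$ (coming from configurations where two consecutive arrows fail to share a legal polygon corner) are killed on the homological side because the concatenated graded curves become null-homotopic or cross a forbidden edge. This is where the hypothesis that $S$ is silting (and hence presilting, so that $\Hom_{\per A}(S,S[n])=0$ for $n>0$) will be used: the vanishing of positive-degree intersections on the geometric side corresponds to the silting condition, and ensures that no differential correction is needed when passing from the graded algebra $A^{S}$ to its degree-$0$ cohomology.
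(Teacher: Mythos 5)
The paper itself contains no proof of this statement: Theorem \ref{thm-endo-alg} is recalled from \cite{LZ2022}, where the graded algebra $A^S$ is actually constructed, and is used here as an imported black box. So there is no in-paper argument to measure your attempt against; I can only assess the sketch on its own terms.

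Your outline identifies the right mechanism --- the Opper--Plamondon--Schroll dictionary matching $\bigoplus_{n}\Hom_{\per A}(S_i,S_j[n])$ with graded intersections of the admissible curves $\tc_i$, followed by reading off the degree-zero part --- but as a proof it has a genuine gap. The entire content of the statement is that a \emph{combinatorially defined} graded algebra computes a homological invariant, yet you never pin down the definition of $A^S$: you posit a construction ``analogous to Construction \ref{construction}'' (note that even there the grading is $1-\ii_p$, not $\ii_p$ as you write) and then argue about that guessed object. The decisive step --- that the arrows, gradings, relations and products of the algebra defined in \cite{LZ2022} coincide with morphisms, shifts and compositions in $\per A$ --- is exactly the part you flag as something ``one must verify'' and then do not verify; once that identification is granted, the conclusion $(A^S)^0=\End_{\per A}(S)$ is immediate, so deferring it defers the whole theorem. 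A second, smaller issue is where the silting hypothesis enters. It is not needed to suppress a differential (the geometric model already yields a formal graded gentle algebra with zero differential); its real role is that presilting forces every arrow of the graded quiver of $A^S$ to have degree of one fixed sign, so that a path of total degree $0$ must consist entirely of degree-$0$ arrows and $(A^S)^0$ is presented by the subquiver of degree-$0$ arrows. Without that, $(A^S)^0$ could contain compositions of arrows of strictly positive and strictly negative degree. This is precisely the fact the paper exploits later when it deletes the nonzero-degree arrows of $\widetilde{\Q}$ to bound $\gldim B$, and your sketch gestures at it but attributes it to the wrong cause.
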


By Theorem \ref{thm-endo-alg}, we know that the silted algebras of hereditary gentle algebra $A$ can be viewed as the 0-th cohomology of the graded algebra $A^{S}$ which induced by the $2$-term silting complex $S$. Thus, we need to show that there are no algebras $\H^0(A^S)$ of global dimension 3. Thanks to the work of the second author of this paper and his coauthors \cite{LGH2022}, they found that the global dimension of $A$ can be calculated by the edges of the polygons which obtained by the full formal arc system cutting the marked surface of $A$.

To be precise,  let $\{\Delta_i\mid 1\le i\le d\}$ be the set of elementary polygons which obtained by $\Dgreen$ cutting $\Surf(A)$. Denote by $\mathfrak{C}(\Delta_i)$ the number of edges of $\Delta_i$ belong to $\Dgreen$ if the unmarked boundary component of $\Surf(A)$ is not an edge of $\Delta_i$; otherwise, $\mathfrak{C}(\Delta_i)=\infty$. Then, we have

\begin{theorem}\label{thm:global-dimension}
The global dimension of $A$ is
\begin{center}
$\gldim A=\max\limits_{1\leq i\leq d}{\mathfrak{C}(\Delta_i)}-1$.
\end{center}
\end{theorem}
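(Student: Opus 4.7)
The plan is to prove that for every 2-term silting complex $S$ over a hereditary gentle algebra $A$, the silted algebra $B := \End_{\per A}(S)$ satisfies $\gldim B \le 2$. Since every silted algebra is shod \cite{BZ2016} and a strictly shod algebra requires global dimension exactly $3$, this bound immediately yields the theorem.

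First I invoke the Adachi--Iyama--Reiten bijection (Theorem \ref{thm:bijection-between-silting-objects-and-support-tilting-modules}) to write $S \cong (P_1 \oplus P \xrightarrow{(f\ 0)} P_0)$, where $f \colon P_1 \to P_0$ is a minimal projective presentation of the support $\tau$-tilting $A$-module $M = \H^0(S)$ and $P$ is its projective support complement. Each indecomposable summand of $S$ is then either an indecomposable projective $A$-module (shifted to degree $0$ or $-1$) or a two-term complex $P_{1,i} \to P_{0,i}$ realizing the minimal projective presentation of an indecomposable summand $M_i$ of $M$. I then apply the curve embedding of Theorem \ref{thm:curve embeding} to realize each summand of $S$ as an admissible curve on $\SURF^{\F_A}(A)$: the projective summands correspond to (graded) $\gbullet$-arcs, while the non-projective summands correspond to $\tc_i^{\circlearrowleft} = \cemb(c_i)$ for the permissible curves $c_i$ with $\MM(c_i) \cong M_i$. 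Combining Theorem \ref{thm-endo-alg} with the silted-algebra characterization of gentle algebras from \cite{LZ2022}, this collection of admissible curves is the $\gbullet$-graded full formal arc system of the marked ribbon surface attached to $B \cong \H^0(A^S)$.

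At this point I apply Theorem \ref{thm:global-dimension} to $B$: its global dimension equals $\max_i \mathfrak{C}(\Delta_i^S) - 1$, where $\{\Delta_i^S\}$ are the elementary polygons cut out by the above arc system. Since $A$ is hereditary gentle, Proposition \ref{prop:disk-and-annulus} implies $\Surf(A)$ is a disk or an annulus. In both cases I will verify that the arc system arising from $S$ subdivides the surface into digons and triangles only, so that every elementary polygon has at most $3$ edges in the arc system, yielding $\mathfrak{C}(\Delta_i^S) \le 3$ and hence $\gldim B \le 2$.

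The main obstacle is precisely this final geometric verification, especially in the annulus case $A = \kk\widetilde{\mathbb{A}}_n$: I must rule out both an elementary polygon with more than three arc-edges and an elementary polygon having the (possibly newly unmarked) boundary component as an edge, which would give $\mathfrak{C} = \infty$. The disk case $A = \kk\mathbb{A}_n$ is essentially automatic since any full formal system of boundary-to-boundary arcs on a disk triangulates it into triangles. For the annulus, I will use the silting hypothesis on $S$ (namely generation of $K^b(\proj A)$ together with $\Hom(S, S[i]) = 0$ for $i > 0$) and the intersection-index condition $\ii(\ta_{\gbullet}, \ta_{\rbullet}) = 0$ of Construction \ref{construction} to force the admissible curves $\cemb(c_i)$ to cut across the annulus in a way that every elementary polygon is a triangle with a boundary edge that carries a marked point, thereby completing the bound on $\mathfrak{C}$.
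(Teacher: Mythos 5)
Your proposal does not prove the stated theorem. The statement at issue is the combinatorial formula $\gldim A=\max_{1\leq i\leq d}\mathfrak{C}(\Delta_i)-1$, expressing the global dimension of a gentle algebra in terms of the number of $\Dgreen$-edges of the elementary polygons cut out by the full formal arc system. What you have written is instead an outline of Theorem \ref{main thm} (nonexistence of strictly shod algebras among hereditary gentle algebras), and in your third paragraph you explicitly \emph{apply} Theorem \ref{thm:global-dimension} to the silted algebra $B$. Relative to the assigned statement this is circular: you assume precisely the result you were asked to establish, and then prove something downstream of it. (The paper itself offers no proof either --- it imports the result from \cite[Theorem 5.10]{LGH2022} --- but citing it is not proving it, and your text neither cites nor proves it; it uses it.)

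A proof of the actual statement has to tie the homological algebra of $A=\kk\Q/\I$ to the polygons. The essential mechanism is that for a gentle algebra the minimal projective resolution of the simple module at the vertex $\mathfrak{v}(a_{\gbullet})$ is governed by the maximal sequences of arrows $\alpha_1,\dots,\alpha_{k-1}$ with each consecutive product $\alpha_i\alpha_{i+1}\in\I$ (the ``forbidden threads'' starting at that vertex), and that such a maximal sequence of length $k-1$ is exactly what one reads off by walking around the $k$ consecutive $\Dgreen$-edges of an elementary polygon; the resolution then terminates after $k-1$ syzygy steps, so the corresponding simple has projective dimension $k-1$. An elementary polygon whose non-$\Dgreen$ edge is an unmarked boundary component yields a cyclically repeating forbidden sequence, hence infinite projective dimension, matching the convention $\mathfrak{C}(\Delta_i)=\infty$. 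Taking the maximum over all simples, i.e.\ over all polygons, gives the formula. None of these ingredients appears in your proposal, so as a proof of the stated theorem it does not get off the ground; it would, with Theorem \ref{thm:global-dimension} granted, be a reasonable sketch of Theorem \ref{main thm} instead.
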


\begin{proposition} \label{prop:FFAS}
Let $\Gamma = \{ c_i \in \PC(\SURF^{\F_A}(A)) | 1\le i\le n\}$ be a triangulation of the marked ribbon surface $\SURF^{\F_A}(A)$ of $A$. Then the following statements hold.
\begin{itemize}
  \item[\rm(1)] $\MM(\Gamma):=\bigoplus_{1\le i\le n}\MM(c_i)$ is a support $\tau$-tilting $A$-module.
  \item[\rm(2)] $\Gamma^{\circlearrowleft} := \{c_i^{\circlearrowleft} \mid \MM(c_i)\neq0\}\bigcup\{c_i \mid \MM(c_i)=0\}$
      is an $\gbullet$-$\mathrm{FFAS}$ of $\SURF^{\F_A}(A)$. Furthermore, $\X(\tGamma^{\circlearrowleft}):=\bigoplus_{1\le i\le n}\X(\widehat{c_i})$ is a $2$-term silting object in $\per A$, where
      \[\widehat{c_i}=\left\{\begin{array}{ll}
      \tc_i^{\circlearrowleft}&\mathrm{if}\ \MM(c_i)\neq0,\\
      \tc_i&\mathrm{if}\ \MM(c_i)=0,
\end{array}\right.\]
    and $\tc_i^{\circlearrowleft}$ $($resp., $\tc_i$$)$ is the graded curve of $c_i^{\circlearrowleft}$ $($resp., $c_i$$)$ with some grading.
\end{itemize}
\end{proposition}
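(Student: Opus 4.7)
The plan is to treat the two statements separately, building on the curve embedding already established in Theorem \ref{thm:curve embeding} and the Adachi--Iyama--Reiten bijection recalled in Theorem \ref{thm:bijection-between-silting-objects-and-support-tilting-modules}.

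For part (1), I would use the geometric dictionary for gentle algebras. A triangulation $\Gamma$ of $\SURF^{\F_A}(A)$ is, by definition, a maximal pairwise non-crossing collection of permissible curves, of cardinality $n$ equal to the rank of $A$. Each $c_i \in \Gamma$ determines, via Theorem \ref{thm:OPS and BCS corresponding}(1), an indecomposable string module $\MM(c_i)$ or the zero module. The non-crossing condition in $\innerSurf$ translates under the BCS correspondence into the vanishing $\Hom_A(\MM(c_i), \tau \MM(c_j)) = 0$ for all $i,j$, so $\MM(\Gamma)$ is $\tau$-rigid. The curves $c_i$ with $\MM(c_i)=0$ contribute the projective part $P$ of the support $\tau$-tilting pair $(\MM(\Gamma), P)$, and counting $|\MM(\Gamma)| + |P| = n = |A|$ then forces $\MM(\Gamma)$ to be support $\tau$-tilting.

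For part (2), I split the argument into two steps. First, I verify that $\Gamma^{\circlearrowleft}$ is an $\gbullet$-FFAS. The operation $c\mapsto c^{\circlearrowleft}$ defined in Section \ref{sect:embedding} only modifies the two end segments $c_{(0,1)}$ and $c_{(m(c),m(c)+1)}$, sliding an endpoint lying in $\E$ (or an interior endpoint) to the nearest $\gbullet$-marked point along the positive boundary orientation, while the interior segments are fixed. A case-by-case check based on Figure \ref{fig:endsegment} and the arguments in Proposition \ref{prop:curve change} shows that this local modification can be performed within a neighbourhood of the original endpoint that contains no other curve of $\Gamma$, so no new interior crossings are created. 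Combined with the fact that the elementary polygons cut out by $\Gamma$ are unchanged in their interior, the resulting collection $\Gamma^{\circlearrowleft}$ consists of pairwise non-crossing $\gbullet$-curves whose complementary polygons each have exactly one boundary edge, i.e.\ an $\gbullet$-FFAS.

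Second, I combine Theorem \ref{thm:curve embeding} with Theorem \ref{thm:bijection-between-silting-objects-and-support-tilting-modules}. For each $c_i$ with $\MM(c_i)\ne 0$, Theorem \ref{thm:curve embeding} yields $\X(\tc_i^{\circlearrowleft}) \cong \bfP(\MM(c_i))$, the minimal projective presentation concentrated in degrees $-1$ and $0$. For each $c_i$ with $\MM(c_i)=0$, the graded curve $\tc_i$ corresponds under $\X$ to the shift of an indecomposable projective, supplying precisely the projective summand $P$ appearing in the $\tau$-rigid pair $(\MM(\Gamma), P)$ of part (1). Taking the direct sum, $\X(\tGamma^{\circlearrowleft})$ is exactly the two-term complex $P_1 \oplus P \xrightarrow{(f\ 0)} P_0$, where $f\colon P_1 \to P_0$ is a minimal projective presentation of $\MM(\Gamma)$, so Theorem \ref{thm:bijection-between-silting-objects-and-support-tilting-modules} identifies it as a $2$-term silting object in $\per A$.

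The principal obstacle I anticipate is the first step of part (2): proving rigorously that the pointwise endpoint slides $c_i \mapsto c_i^{\circlearrowleft}$, performed simultaneously for every $c_i\in\Gamma$, still yield a globally non-crossing family and still dissect $\Surf(A)$ into elementary polygons with a unique boundary edge. The local pictures in Figures \ref{fig:pf in prop:curve change I} and \ref{fig:pf in prop:curve change II} settle the local geometry, but a clean global argument requires tracking how neighbouring curves at the same extra marked point interact after their endpoints are moved along $\bSurf$; once this is laid out by a careful case analysis on the shapes in Figures \ref{fig:PC type 1}--\ref{fig:PC type 3}, the remaining assertions follow directly from the results already established.
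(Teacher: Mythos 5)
Your identification of the silting statement coincides with the paper's: combine Proposition \ref{prop:curve change} (equivalently Theorem \ref{thm:curve embeding}), which identifies $\X(\tc_i^{\circlearrowleft})$ with the minimal projective presentation of $\MM(c_i)$ and $\X(\tc_j)$ with a shifted indecomposable projective when $\MM(c_j)=0$, with the Adachi--Iyama--Reiten bijection of Theorem \ref{thm:bijection-between-silting-objects-and-support-tilting-modules}; this exhibits $\X(\tGamma^{\circlearrowleft})$ as exactly the $2$-term silting complex attached to the support $\tau$-tilting pair of part (1). For part (1) itself the paper simply quotes the bijection $\Tri(\SURF^{\F_A}(A))\to\stautilt(A)$ from \cite[Theorem B]{HZZ2020} rather than re-deriving $\tau$-rigidity from non-crossing; your sketch amounts to re-proving that cited result, which is harmless but not what the paper does.

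Where you genuinely diverge is in the order of the two claims in part (2), and this is where your plan has a hole. You propose to prove first, by a direct geometric case analysis, that the simultaneous endpoint slides $c_i\mapsto c_i^{\circlearrowleft}$ yield a pairwise non-crossing family, and you correctly flag this as the principal obstacle --- but you do not carry it out, and it is not a routine local check: several curves of $\Gamma$ ending at the same extra marked point are all displaced to the same $\gbullet$-marked point, and one must control their cyclic order and possible new intersections there. The paper sidesteps this entirely by reversing the implication: it first knows that $\X(\tGamma^{\circlearrowleft})$ is silting (via the AIR bijection, which requires no non-crossing input), and then argues that an interior intersection $p\in \hat{c}_x\cap\hat{c}_y\cap(\innerSurf)$ would produce nonzero morphisms $\X(\hat{c}_x)\to\X(\hat{c}_y)[d]$ and $\X(\hat{c}_y)\to\X(\hat{c}_x)[-d]$ with $d=\ii_p(\hat{c}_x,\hat{c}_y)$, contradicting the presilting condition; non-crossing together with \cite[Proposition 1.11]{APS2019} then gives the $\gbullet$-FFAS property. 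Since your silting argument does not in fact use the FFAS claim, you should reorder: establish silting first exactly as you do, then deduce non-crossing from the intersection--Hom dictionary instead of from a hands-on isotopy argument. As written, the FFAS half of your proof is incomplete.
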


\begin{proof}
(1) By \cite[Theorem B]{HZZ2020}, we have a bijection
\[\Tri(\SURF^{\F_A}(A)) \to \stautilt(A), \Gamma\mapsto \MM(\Gamma)\]
between the set $\Tri(\SURF^{\F_A}(A))$ of all triangulations of $\SURF^{\F_A}(A)$
and the set $\stautilt(A)$ of all isoclasses of support $\tau$-tilting $A$-modules.

(2) For any $\Gamma = \{c_i | 1\le i\le n\}\in\Tri(\SURF^{\F_A}(A))$, by Proposition \ref{prop:curve change} and Theorem \ref{thm:bijection-between-silting-objects-and-support-tilting-modules}, we have
\[\X(\tGamma^{\circlearrowleft}) = \bigoplus_{1\le i\le n}\X(\widehat{c_i})
\cong \bigoplus_{1\le i\le n \atop \MM(c_i)\ne 0}\X(\tc_{\bfP(\MM(c_i))})
\oplus \bigoplus_{1\le j\le n \atop \MM(c_j) = 0} \X(\tc_j), \]
where 
$c_j$ $( = a_{\gbullet} \in \Dgreen(A))$ is the admissible curve with grading $\tc_j$ such that
$\X(\tc_j)[-1]$ is the shift of indecomposable projective module corresponding to $a_{\gbullet}$.
Note that any two admissible curves in $\Gamma^{\circlearrowleft}$ has no intersection in $\innerSurf$.
Otherwise, there are $x$ and $y$ ($1\le x\ne y\le n$) such that $\hat{c}_x \cap \hat{c}_y \cap (\innerSurf) \ne \varnothing$. Then, we have
\begin{center}
  $\dim_{\kk}\Hom_{\per A}(\X(\hat{c}_x), \X(\hat{c}_y)[d]) \ne 0$ \\
  and $\dim_{\kk}\Hom_{\per A}(\X(\hat{c}_y), \X(\hat{c}_x)[-d]) \ne 0$,
\end{center}
where $d = \ii_p(\hat{c}_x, \hat{c}_y)$ and $p\in \hat{c}_x \cap \hat{c}_y \cap (\innerSurf)$.
This shows that $\X(\tGamma^{\circlearrowleft})$ is not silting, we obtain a contradiction. Thus, by \cite[Proposition 1.11]{APS2019}, we obtain that $\Gamma^{\circlearrowleft}$ is an $\gbullet$-FFAS.
\end{proof}

\begin{theorem}\label{main thm}
Let $A$ be a hereditary gentle algebra. Then there are no strictly shod algebras in $A$.
\end{theorem}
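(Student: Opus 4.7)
The plan is to argue by contradiction using the geometric characterization of silted algebras together with the global dimension formula of Theorem~\ref{thm:global-dimension}. Assume that some silted algebra $B=\End_{\per A}(S)$ over a hereditary gentle algebra $A$ is strictly shod, so $\gldim B=3$. Via Theorem~\ref{thm:bijection-between-silting-objects-and-support-tilting-modules} and Proposition~\ref{prop:FFAS}, the 2-term silting complex $S$ corresponds to a triangulation $\Gamma$ of $\SURF^{\F_A}(A)$ together with the graded $\gbullet$-FFAS $\tGamma^{\circlearrowleft}$ obtained from the curve embedding of Theorem~\ref{thm:curve embeding}. By Theorem~\ref{thm-endo-alg}, $B\cong\H^0(A^S)$; thus $B$ is itself a gentle algebra whose geometric model is the surface $\Surf(A)$ dissected by the $\gbullet$-FFAS $\Gamma^{\circlearrowleft}$.

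Next I would apply Theorem~\ref{thm:global-dimension} to the gentle algebra $B$. Since $\gldim B=3$, there must exist an elementary polygon $\Delta$ of the dissection of $\Surf(A)$ by $\Gamma^{\circlearrowleft}$ with $\mathfrak{C}(\Delta)=4$; that is, $\Delta$ is a quadrilateral whose four edges all lie in $\Gamma^{\circlearrowleft}$, and $\Delta$ shares no edge with an unmarked boundary component. By Proposition~\ref{prop:disk-and-annulus}, $\Surf(A)$ is either a disk (type $\A_n$) or an annulus (type $\tA_n$). In the disk case every boundary component carries $\gbullet$-marked points, so the constraint is purely that some elementary polygon of $\Gamma^{\circlearrowleft}$ has four $\gbullet$-arc edges; in the annulus case one must additionally ensure that $\Delta$ does not touch the unmarked inner boundary, otherwise $\mathfrak{C}(\Delta)=\infty$ and $\gldim B=\infty$, contradicting $\gldim B=3$.

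To derive the contradiction, I would exploit the fact that $\Gamma$ is a triangulation: its elementary polygons are digons or triangles, and the operation $c\mapsto c^{\circlearrowleft}$ only alters each curve near its endpoints, moving a $\rbullet$-endpoint to the next $\gbullet$-marked point (as described in Cases $\bfI$--$\bfIII$ of Figure~\ref{fig:endsegment} and Proposition~\ref{prop:curve change}). I would carry out a local case analysis on each triangle/digon of $\Gamma$ and on the three endpoint types to show that every elementary polygon of $\Gamma^{\circlearrowleft}$ inherits at least one boundary segment containing a $\gbullet$-marked point, forcing $\mathfrak{C}(\Delta)\le 3$ in the disk case and contradicting the existence of a ``pure'' quadrilateral. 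In the annulus case the same local analysis works, together with the observation that any elementary polygon of $\Gamma^{\circlearrowleft}$ enclosing the inner boundary must share an edge with that boundary and therefore has $\mathfrak{C}(\Delta)=\infty$.

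The main obstacle will be the combinatorial bookkeeping in the annulus case: because curves may wrap around the inner boundary, the dissection of $\Surf(A)$ by $\Gamma^{\circlearrowleft}$ can be subtle, and I expect to need a careful classification of how the three end-segment types $\bfI$, $\bfII$, $\bfIII$ interact at each vertex of the triangulation to rule out a ``pure'' 4-sided elementary polygon whose four edges all belong to $\Gamma^{\circlearrowleft}$. Once this case analysis is completed, both corollaries of Theorem~\ref{thm2} follow immediately from the classification in \cite{LZ2022}.
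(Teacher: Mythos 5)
Your overall strategy coincides with the paper's: pass from the $2$-term silting complex $S$ to a triangulation $\Gamma$ via Theorem \ref{thm:bijection-between-silting-objects-and-support-tilting-modules} and Proposition \ref{prop:FFAS}, apply the curve embedding to obtain the $\gbullet$-FFAS $\Gamma^{\circlearrowleft}$, and invoke Theorem \ref{thm:global-dimension}; whether one argues directly or by contradiction is immaterial. However, there is a genuine gap in the combinatorial core. You misread the quantity $\mathfrak{C}(\Delta)$: by the definition of an FFAS every elementary polygon has exactly one edge on $\bSurf$, and $\mathfrak{C}(\Delta)$ counts only the arc edges, so $\mathfrak{C}(\Delta)=4$ means $\Delta$ is a pentagon with four arc edges and one boundary edge, not a ``pure quadrilateral whose four edges all lie in $\Gamma^{\circlearrowleft}$.'' Consequently the conclusion you propose to extract from your case analysis --- that every elementary polygon of $\Gamma^{\circlearrowleft}$ inherits at least one boundary segment --- is a non sequitur: a polygon can contain a boundary segment and still have four or more arc edges, so this does not force $\mathfrak{C}(\Delta)\le 3$. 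What actually has to be proved, and what the paper proves, is that every elementary polygon of $\Gamma^{\circlearrowleft}$ has at most three arc edges (equivalently, at most four edges in total). This is obtained by analyzing, at each marked point $p$, the fan of curves $c_1,\dots,c_t$ of $\Gamma$ ending at $p$ (two configurations, according to whether $p$ is an extra marked point or an $\gbullet$-marked point) and showing that the rotation $c\mapsto c^{\circlearrowleft}$ either moves the whole fan coherently to $p^{\circlearrowleft}$ or splits at a unique index $\imath$, producing only digons, triangles and quadrilaterals; the key input is that the curves of $\Gamma^{\circlearrowleft}$ have no interior intersections, by Proposition \ref{prop:FFAS}\,(2). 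Your proposal announces this case analysis but does not carry it out, and as stated it is aimed at the wrong target configuration.

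A second, smaller gap: Theorem \ref{thm:global-dimension} applies to the gentle algebra $\kk\Q/\I$ attached to the dissection $\Gamma^{\circlearrowleft}$, whereas the silted algebra $B=\H^0(\grEnd_{\per A}(S))$ is obtained from the graded algebra $\grEnd_{\per A}(S)$ by deleting the arrows of non-zero degree. One therefore still needs the inequality $\gldim B\le\gldim(\kk\Q/\I)$ before the formula $\max_i\mathfrak{C}(\Delta_i)-1\le 2$ can be transferred to $B$; your proposal passes over this by asserting that $B$ itself is the gentle algebra whose geometric model is $\Surf(A)$ dissected by $\Gamma^{\circlearrowleft}$.
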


\begin{proof} Let $S$ be a $2$-term silting complex in $\per A$. By Theorem \ref{thm:bijection-between-silting-objects-and-support-tilting-modules}, $\mathrm{H}^0(S)$ is a support $\tau$-tilting module over $A$.
Then there is a triangulation $\Gamma$ of $\SURF^{\F_A}(A)$ such that $\mathrm{H}^0(S)\cong \bigoplus_{c\in\Gamma}\MM(c)$. By Proposition \ref{prop:FFAS} and Theorem \ref{thm:curve embeding}, $\Gamma^{\circlearrowleft}$ is an $\gbullet$-FFAS. Then
\[B = \mathrm{H}^0(\grEnd_{\per A}(S)) \cong \mathrm{H}^0(\grEnd_{\per A}(\X(\Gamma^{\circlearrowleft}))) \]
where $\grEnd_{\per A}(S)$ is the dg endomorphism algebra of $S$. Let $\Tri(\Gamma)$ be the set of all triangles obtained by $\Gamma$ cutting $\SURF^{\F_A}(A)$ and $\Delta(\Gamma^{\circlearrowleft})$ be the set of all elementary $\gbullet$-polygons obtained by $\Gamma^{\circlearrowleft}$ cutting $\SURF^{\F_A}(A)$.
Next, we show that the number of all edges of any elementary $\gbullet$-polygon in $\Delta(\Gamma^{\circlearrowleft})$ is at most $4$.

For convenience, we always use $c^{\circlearrowleft}$ to represent the curves in $\Gamma^{\circlearrowleft}$. Now consider all vertices of triangles in $\Tri(\Gamma)$. We have two cases in \Pic \ref{fig:main thm 1} ($t\in\NN^+$).
\begin{figure}[H]
\definecolor{ffqqqq}{rgb}{1,0,0}
\definecolor{qqwuqq}{rgb}{0,0.5,0}
\begin{tikzpicture}
\draw[black] (-2.5,0) node[below]{$\bSurf$};
\draw[black][->][line width=1.2pt] (-2.5,0) -- ( 2.5,0);
\fill[qqwuqq] (-1.5, 0) circle (0.1cm);
\fill[qqwuqq] ( 1.5, 0) circle (0.1cm);
\draw[qqwuqq] ( 1.5, 0) arc (0:180:1.5) [line width = 1pt];
\fill [red] (0,0) circle (2.5pt); \fill [white] (0,0) circle (1.8pt);
\draw[blue] ( 0.00, 0.00) -- (-1.41, 1.41) [line width=1pt][->];
\draw[blue] (-1.41, 1.41) node[above]{$c_1$};
\draw[blue] ( 0.00, 0.00) -- ( 0.00, 2.00) [line width=1pt][->];
\draw[blue] (-0.00, 2.00) node[above]{$c_i$};
\draw[blue] ( 0.00, 0.00) -- ( 1.41, 1.41) [line width=1pt][->];
\draw[blue] ( 1.41, 1.41) node[above]{$c_t$};
\draw[blue] (-0.30, 0.50) node[above]{$\cdots$};
\draw[blue] ( 0.30, 0.50) node[above]{$\cdots$};
\draw (0,-0.5) node{($1\leq i\leq t$)};
\draw (0,-1.0) node{(1)};
\end{tikzpicture}
\ \ \ \
\begin{tikzpicture}
\draw[black] (-2.5,0) node[below]{$\bSurf$};
\draw[black][->][line width=1.2pt] (-2.5,0) -- ( 2.5,0);
\fill[qqwuqq] (-2, 0) circle (0.1cm);
\fill[qqwuqq] ( 0, 0) circle (0.1cm);
\fill[qqwuqq] ( 2, 0) circle (0.1cm);
\draw[qqwuqq][rotate around={ 25:(0,0)}] (1.5,0) arc (0:20:1.5) [line width = 1pt];
\draw[qqwuqq][rotate around={ 60:(0,0)}] (1.5,0) arc (0:20:1.5) [line width = 1pt];
\draw[qqwuqq][rotate around={ 95:(0,0)}] (1.5,0) arc (0:20:1.5) [line width = 1pt];
\draw[qqwuqq][rotate around={135:(0,0)}] (1.5,0) arc (0:20:1.5) [line width = 1pt];
\fill [red] (-1,0) circle (2.5pt); \fill [white] (-1,0) circle (1.8pt);
\fill [red] ( 1,0) circle (2.5pt); \fill [white] ( 1,0) circle (1.8pt);
\draw[blue][rotate around={ 36:(0,0)}] ( 0.00, 0.00) -- ( 2.00, 0.00) [line width=1pt][->];
\draw[blue][rotate around={ 72:(0,0)}] ( 0.00, 0.00) -- ( 2.00, 0.00) [line width=1pt][->];
\draw[blue][rotate around={108:(0,0)}] ( 0.00, 0.00) -- ( 2.00, 0.00) [line width=1pt][->];
\draw[blue][rotate around={144:(0,0)}] ( 0.00, 0.00) -- ( 2.00, 0.00) [line width=1pt][->];
\draw[blue][rotate around={ 36:(0,0)}] ( 2.20, 0.00) node{$c_t$};
\draw[blue][rotate around={ 72:(0,0)}] ( 2.20, 0.00) node{$c_{i}$};
\draw[blue][rotate around={108:(0,0)}] ( 2.20, 0.00) node{$c_{i-1}$};
\draw[blue][rotate around={144:(0,0)}] ( 2.20, 0.00) node{$c_1$};
\draw[blue] (-0.60, 0.50) node[above]{$\cdots$};
\draw[blue] ( 0.60, 0.50) node[above]{$\cdots$};
\draw (0,-0.5) node{($1\leq i\leq t$)};
\draw (0,-1.0) node{(2)};
\end{tikzpicture}
\caption{Two cases of $\Tri(\Gamma)$}
\label{fig:main thm 1}
\end{figure}
\noindent
\begin{itemize}
\item[Case] (1) We have $c_1(0)=\cdots=c_t(0) = p$ is an extra marked point.
  Thus, by Proposition \ref{prop:curve change}, we obtain that
  \[c^{\circlearrowleft}_1(0)=\cdots=c^{\circlearrowleft}_t(0) \ne p \]
  and $c^{\circlearrowleft}_1$, $\ldots$, $c^{\circlearrowleft}_t$ are of the form shown in
  \Pic \ref{fig:main thm 2} (1). In this case, all triangles in $\Tri(\Gamma)$ with vertex $p$ changes to
  elementary $\gbullet$-polygons in $\Delta(\Gamma)$ with vertices $p^{\circlearrowleft}$.

\item[Case] (2) If there exists an integer $1\le i\le t$ such that $c_i(0) \ne c^{\circlearrowleft}_i(0)$
  (resp., $c_i(0) = c^{\circlearrowleft}_i(0)$),
  then $c_j(0) \ne c^{\circlearrowleft}_j(0)$ (resp., $c_j(0) = c^{\circlearrowleft}_j(0)$) for any $i\le j\le t$ (resp., $1\le j\le i$).
  That is, $c^{\circlearrowleft}_1$, $\ldots$, $c^{\circlearrowleft}_t$ are of the form shown in
  \Pic \ref{fig:main thm 2} (2).
  Otherwise, we have $c_i(0)\ne c^{\circlearrowleft}_i(0)$ (resp., $c_i(0)= c^{\circlearrowleft}_i(0)$)
  and $c_j(0)=c^{\circlearrowleft}_j(0)$ (resp., $c_j(0)\ne c^{\circlearrowleft}_j(0)$) for some $j>i$ (resp., $j<i$),
  then $c^{\circlearrowleft}_i \cap c^{\circlearrowleft}_j \cap (\innerSurf) \ne \varnothing$,
  this is a contradiction by Proposition \ref{prop:FFAS} (2).

  Therefore, there exists a unique $1\le \imath\le t$ satisfying $c_{\imath}(0) \ne c^{\circlearrowleft}_{\imath}(0)$ such that
  $c_{\hbar}(0) = c^{\circlearrowleft}_{\hbar}(0)$ for all $1\le \hbar < \imath$
  and $c_{\jmath}(0) \neq c^{\circlearrowleft}_{\jmath}(0)$ for all $\imath < \jmath \le t$.
  Then, by Proposition \ref{prop:FFAS} (2), $c^{\circlearrowleft}_{\imath-1}(1) = c^{\circlearrowleft}_{\imath}(1)$. Thus,
  the triangle, say $\mathcal{T}$, with edges $c_{\imath-1}$ and $c_{\imath}$ changes to the elementary $\gbullet$-polygon, say $\mathcal{P}$, with edges $c_{\imath-1}^{\circlearrowleft}$, $c_{\imath}^{\circlearrowleft}$ and $pp^{\circlearrowleft}$.
\end{itemize}

\begin{figure}[htbp]
\definecolor{ffqqqq}{rgb}{1,0,0}
\definecolor{qqwuqq}{rgb}{0,0.5,0}
\begin{tikzpicture}
\draw[black] (-2.5,0) node[below]{$\bSurf$};
\draw[black][->][line width=1.2pt] (-2.5,0) -- ( 2.5,0);
\fill[qqwuqq] (-1.5, 0) circle (0.1cm);
\fill[qqwuqq] ( 1.5, 0) circle (0.1cm);
\draw[qqwuqq] ( 1.5, 0) arc (0:180:1.5) [line width = 1pt];
\fill [red] (0,0) circle (2.5pt); \fill [white] (0,0) circle (1.8pt);
\draw[blue] ( 0.00, 0.00) -- (-1.41, 1.41) [line width=1pt][->];
\draw[blue] ( 0.00, 0.00) -- ( 0.00, 2.00) [line width=1pt][->];
\draw[blue] ( 0.00, 0.00) -- ( 1.41, 1.41) [line width=1pt][->];
\draw[orange] (-1.41, 1.41) -- (-1.06, 1.06) to[out=-45,in=170] ( 1.50, 0.00) [line width=1pt][<-];
\draw[orange] (-1.41, 1.41) node[above]{$\tc_1^{\circlearrowleft}$};
\draw[orange] (-0.  , 2.  ) -- ( 0.  , 1.50) to[out=-90,in=160] ( 1.50, 0.00) [line width=1pt][<-];
\draw[orange] (-0.00, 2.00) node[above]{$\tc_i^{\circlearrowleft}$};
\draw[orange] ( 1.41, 1.41) -- ( 1.06, 1.06) to[out=225,in=135] ( 1.50, 0.00) [line width=1pt][<-];
\draw[orange] ( 1.41, 1.41) node[above]{$\tc_t^{\circlearrowleft}$};
\draw[blue] (-0.30, 0.50) node[above]{$\cdots$};
\draw[blue] ( 0.30, 0.50) node[above]{$\cdots$};
\draw (0,0) node[below]{$p$};
\draw (1.5,0) node[below]{$p^{\circlearrowleft}$};
\draw (0,-1.0) node{(1)};
\end{tikzpicture}
\ \ \ \
\begin{tikzpicture}
\draw[black] (-2.5,0) node[below]{$\bSurf$};
\draw[black][->][line width=1.2pt] (-2.5,0) -- ( 2.5,0);
\fill[qqwuqq] (-2, 0) circle (0.1cm);
\fill[qqwuqq] ( 0, 0) circle (0.1cm);
\fill[qqwuqq] ( 2, 0) circle (0.1cm);
\draw[qqwuqq][rotate around={ 25:(0,0)}] (1.5,0) arc (0:20:1.5) [line width = 1pt];
\draw[qqwuqq][rotate around={ 60:(0,0)}] (1.5,0) arc (0:20:1.5) [line width = 1pt];
\draw[qqwuqq][rotate around={ 95:(0,0)}] (1.5,0) arc (0:20:1.5) [line width = 1pt];
\draw[qqwuqq][rotate around={135:(0,0)}] (1.5,0) arc (0:20:1.5) [line width = 1pt];
\fill [red] (-1,0) circle (2.5pt); \fill [white] (-1,0) circle (1.8pt);
\fill [red] ( 1,0) circle (2.5pt); \fill [white] ( 1,0) circle (1.8pt);
\draw[blue][rotate around={ 36:(0,0)}] ( 0.00, 0.00) -- ( 2.00, 0.00) [line width=1pt][->];
\draw[blue][rotate around={ 72:(0,0)}] ( 0.00, 0.00) -- ( 2.00, 0.00) [line width=1pt][->];
\draw[blue][rotate around={108:(0,0)}] ( 0.00, 0.00) -- ( 2.00, 0.00) [line width=1pt][->];
\draw[blue][rotate around={144:(0,0)}] ( 0.00, 0.00) -- ( 2.00, 0.00) [line width=1pt][->];
\draw[blue] (-0.60, 0.50) node[above]{$\cdots$};
\draw[blue] ( 0.60, 0.50) node[above]{$\cdots$};
\draw[orange] ( 1.62, 1.17) -- ( 1.21, 0.88) to[out=-144, in= 170] ( 2.00, 0.00) [line width=1pt][<-];
\draw[orange] ( 0.62, 1.90) -- ( 0.46, 1.42) to[out=-108, in= 170] ( 2.00, 0.00) [line width=1pt][<-];
\draw[orange][rotate around={108:(0,0)}] ( 0.00, 0.00) -- ( 2.00, 0.00) [line width=1pt][->];
\draw[orange][rotate around={144:(0,0)}] ( 0.00, 0.00) -- ( 2.00, 0.00) [line width=1pt][->];
\draw[orange][rotate around={ 36:(0,0)}] ( 2.20, 0.00) node{$c^{\circlearrowleft}_t$};
\draw[orange][rotate around={ 72:(0,0)}] ( 2.20, 0.00) node{$c^{\circlearrowleft}_{i}$};
\draw[orange][rotate around={108:(0,0)}] ( 2.20, 0.00) node{$c^{\circlearrowleft}_{i-1}$};
\draw[orange][rotate around={144:(0,0)}] ( 2.20, 0.00) node{$c^{\circlearrowleft}_1$};
\draw (2,0) node[below]{$p^{\circlearrowleft}$};
\draw (0,0) node[below]{$p$};
\draw (0,-1.0) node{(2)};
\end{tikzpicture}
\caption{Two cases of $\Gamma^{\circlearrowleft}$}
\label{fig:main thm 2}
\end{figure}

Now we have showed that the number of all edges of any elementary $\gbullet$-polygon in $\Delta(\Gamma^{\circlearrowleft})$ is less than or equal to $4$.

On the other hand, the quiver $\widehat{\Q}$ of $B=\kk\widehat{\Q}/\widehat{\I}$ can be obtained by deleting all arrows with non-zero grading of the graded quiver $\widetilde{\Q}$ of $\grEnd_{\per A}(S) = \kk\widetilde{\Q}/\I$.
Let $\Q$ be the quiver obtained by ignoring the gradings of all arrows of $\widetilde{\Q}$,
then $\widehat{\Q}$ is a subquiver of $\Q$ and $\widehat{\I}$ is generated by all paths of length $2$ lying in $\I$ such that the grading of each arrow is zero.
It is easy to see that
\[\gldim B = \gldim (\kk\widehat{\Q}/\widehat{\I}) \le \gldim (\kk\Q/\I).\]
By \cite[Theorem 1.1 (a)]{BZ2018}, we know that $\gldim (\kk\Q/\I)\leq 3$. Moreover, since the endomorphism algebras of silting objects are also gentle.
Thus, by Theorem \ref{thm:global-dimension}, we have
\begin{center}
$\gldim B\le \gldim (\kk\Q/\I)\le 3-1=2$.
\end{center}

%
\end{proof}

As a consequence of Theorem \ref{main thm}, we have

\begin{corollary}
Let $A$ be a hereditary algebra of Dynkin type $\mathbb{A}_{n}$ with arbitrary orientation. Then the silted algebras of $A$ have two forms:
\begin{itemize}
\item[{\rm(1)}] the tilted algebras of type $\mathbb{A}_{n}$;

\item[{\rm(2)}] the direct product of some tilted algebras of type $\mathbb{A}_{m_{1}},\ldots,\mathbb{A}_{{m}_{k}}$ such that $m_{1}+\cdots+m_{k}=n$
    $(k\ge 2)$.
\end{itemize}
\end{corollary}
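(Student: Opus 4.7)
The plan is to derive this corollary as an almost immediate consequence of Theorem \ref{main thm}, combined with the Buan--Zhou dichotomy \cite{BZ2016} and the geometric model built in the preceding sections. Since $A = \kk\mathbb{A}_n$ is a hereditary gentle algebra by Remark \ref{rem:hereditary-gentle-algebras}, Theorem \ref{main thm} applies. I would fix an arbitrary silted algebra $B = \End_{\per A}(S)$ for some $2$-term silting complex $S$, and write $B = B_1 \times \cdots \times B_k$ for its decomposition into connected blocks, corresponding to an orthogonal decomposition $S = S^{(1)} \oplus \cdots \oplus S^{(k)}$ in $\per A$ with $B_i = \End_{\per A}(S^{(i)})$.

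The next step is to show that each block $B_i$ is tilted. The proof of Theorem \ref{main thm} actually yields the stronger statement $\gldim B \le 2$, and since both shod-ness and global dimension pass to blocks, each $B_i$ is shod with $\gldim B_i \le 2$. By \cite[Theorem 1.1]{BZ2016}, a connected silted algebra is either tilted or strictly shod; since a strictly shod algebra must have global dimension exactly $3$, each $B_i$ must be tilted. Counting the indecomposable summands of the $2$-term silting complex $S$ (which must equal the rank $n$ of $A$) yields $\sum_{i=1}^{k} m_i = n$, where $m_i$ is the number of simples of $B_i$.

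It remains to identify the Dynkin type of each $B_i$. By Proposition \ref{prop:disk-and-annulus}, $\SURF^{\F_A}(A)$ is a disk, and by Proposition \ref{prop:FFAS} the $2$-term silting complex $S$ corresponds to a $\gbullet$-FFAS of this disk. The block decomposition of $B$ translates geometrically to a partition of this FFAS into sub-systems, each triangulating a sub-polygon of the disk with $m_i + 1$ boundary $\gbullet$-marked points; each such sub-polygon is itself the marked ribbon surface of a hereditary gentle algebra of type $\mathbb{A}_{m_i}$. It follows that each tilted block $B_i$ is tilted of type $\mathbb{A}_{m_i}$, and the alternatives $k = 1$ versus $k \ge 2$ produce exactly the two forms (1) and (2) of the corollary.

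The principal technical subtlety lies in the final step: justifying that arcs associated with different blocks lie in disjoint sub-polygons of the disk. This should follow from the vanishing of $\Hom$-spaces between summands in distinct blocks together with the intersection-theoretic description of $\Hom$ via admissible curves established in Section \ref{sect:embedding} and \cite{OPS2018}, which forces the corresponding graded $\gbullet$-curves to be geometrically separated.
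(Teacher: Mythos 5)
Your proposal is correct and follows exactly the route the paper intends: the paper states this corollary with no written proof, deriving it as an immediate consequence of Theorem \ref{main thm} together with the Buan--Zhou dichotomy (connected silted $=$ tilted or strictly shod), which is precisely your argument. The extra details you supply --- the block decomposition of $S$, the rank count $\sum m_i = n$, and the identification of each block's type $\mathbb{A}_{m_i}$ via disjoint sub-polygons of the disk --- are all consistent with the geometric model of Sections 2--4 and fill in what the paper leaves implicit.
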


Moreover, for any triangulation $\Gamma$ of the marked surface shown in
\Pic \ref{fig:hga tilde An}, we have $\grEnd_{\per A}\X(\tGamma^{\circlearrowleft})$ is a dg
one-cycle gentle algebra because $\tGamma^{\circlearrowleft}$ is an $\gbullet$-FFAS. Let $A$ be a hereditary algebra of type $\tA_{n}$ with arbitrary orientation.
Then by \cite[Section 7]{AG2008} and \cite[Theorem 1.1 (a)]{BZ2018}, we have the following classification of the silted algebras of $A$. Notice that the silted algebras of $A$ of the form (2) below can also founded in \cite{Y2020}.

\begin{corollary}
Let $A$ be a hereditary algebra of type $\tA_{n}$ with arbitrary orientation. Then the silted algebras of $A$ have four forms:
\begin{itemize}
\item[{\rm(1)}] the tilted algebras of type $\tA_n$;

\item[{\rm(2)}] the tilted algebras of type $\A_n$;

\item[{\rm(3)}] the direct product of some tilted algebras of type $\A_{m_1},\ldots,\A_{m_k}$ such that $m_{1}+\cdots+m_{k}=n$ $(k\ge 2)$;

\item[{\rm(4)}] the direct product of some tilted algebras of type $\A_{m_{1}},\ldots,\A_{{m}_{k}}, \tA_{\tilde{m}}$ such that $m_{1}+\cdots+m_{k} + \tilde{m}=n$ $(k\ge 1)$.
\end{itemize}
\end{corollary}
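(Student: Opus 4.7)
The plan is to show that every silted algebra arising from a hereditary gentle algebra has global dimension at most $2$; since strictly shod algebras have global dimension exactly $3$, this will suffice.

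Starting from an arbitrary $2$-term silting complex $S\in\per A$, Theorem \ref{thm:bijection-between-silting-objects-and-support-tilting-modules} says that $\mathrm{H}^0(S)$ is a basic support $\tau$-tilting module, which by Proposition \ref{prop:FFAS}(1) corresponds to a triangulation $\Gamma$ of $\SURF^{\F_A}(A)$. Applying the curve embedding $\cemb$ termwise (Theorem \ref{thm:curve embeding}), together with Proposition \ref{prop:FFAS}(2), produces an $\gbullet$-FFAS $\Gamma^{\circlearrowleft}$ such that $\X(\widetilde{\Gamma}^{\circlearrowleft})$ is a silting object in $\per A$ whose endomorphism algebra is the silted algebra $B=\End_{\per A}(S)$. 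By Theorem \ref{thm-endo-alg}, $B\cong\mathrm{H}^0(\grEnd_{\per A}(\X(\widetilde{\Gamma}^{\circlearrowleft})))$: if $\kk\Q/\I$ denotes the ungraded gentle algebra attached to $\Gamma^{\circlearrowleft}$ and $\kk\widehat{\Q}/\widehat{\I}$ the quotient obtained by deleting every arrow of nonzero degree, then $B\cong\kk\widehat{\Q}/\widehat{\I}$, so in particular $\gldim B\le\gldim(\kk\Q/\I)$. The problem then reduces to a combinatorial bound on the elementary $\gbullet$-polygons of $\Gamma^{\circlearrowleft}$.

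The core of the argument is then a case analysis of how each triangle of $\Gamma$ deforms into an elementary $\gbullet$-polygon of $\Gamma^{\circlearrowleft}$ under the shift $c\mapsto c^{\circlearrowleft}$. I would look at each boundary vertex $p$ shared by a fan of curves $c_1,\dots,c_t\in\Gamma$ and split into two subcases: if $p\in\E$, every $c_i$ shifts uniformly to the next $\gbullet$-marked point $p^{\circlearrowleft}$, so each triangle at $p$ is merely relabelled and remains a triangle; if $p\in\M$, then permissibility of $\Gamma^{\circlearrowleft}$ (enforced by the no-interior-intersection argument of Proposition \ref{prop:FFAS}(2), ruling out oscillating shift patterns) forces a unique transition index $\imath$ with $c_j^{\circlearrowleft}=c_j$ for $j<\imath$ and $c_j^{\circlearrowleft}\ne c_j$ for $j\ge\imath$. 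The single transition triangle then becomes an elementary $\gbullet$-polygon with four edges: $c_{\imath-1}$, $c_{\imath}^{\circlearrowleft}$, the third edge of the original triangle, and a boundary arc $pp^{\circlearrowleft}\subset\bSurf$. The crucial observation is that this fourth edge lies on $\bSurf$ and hence is not counted in $\mathfrak{C}(\Delta)$, so every elementary $\gbullet$-polygon $\Delta$ of $\Gamma^{\circlearrowleft}$ satisfies $\mathfrak{C}(\Delta)\le 3$.

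Combining this with Theorem \ref{thm:global-dimension} gives $\gldim(\kk\Q/\I)\le 3-1=2$, whence $\gldim B\le 2$ and $B$ cannot be strictly shod. I expect the main obstacle to lie in the second subcase, namely confirming the uniqueness of the transition index and ensuring that the fan analysis is exhaustive in the presence of an inner boundary component (relevant for type $\widetilde{\mathbb{A}}_n$, where $\Surf(A)$ is an annulus); once this monotonicity of shifting is established, the global-dimension formula of \cite{LGH2022} closes out the argument.
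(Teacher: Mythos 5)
Your proposal is, in substance, the paper's proof of Theorem \ref{main thm}: the passage from a $2$-term silting complex to a triangulation $\Gamma$, the deformation $\Gamma\mapsto\Gamma^{\circlearrowleft}$, the fan analysis at each marked point with the monotone transition index, the conclusion that every elementary $\gbullet$-polygon of $\Gamma^{\circlearrowleft}$ has at most four edges (one of which lies on $\bSurf$ and is not counted by $\mathfrak{C}$), and the appeal to Theorem \ref{thm:global-dimension} to get $\gldim B\le 2$. That argument is fine as far as it goes, but it proves a different statement from the one you were asked to prove. The corollary is a \emph{classification} of the silted algebras of $\kk\tA_n$ into four explicit forms. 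Ruling out strictly shod algebras only yields, via the Buan--Zhou dichotomy of \cite{BZ2016} (a connected algebra is silted if and only if it is tilted or strictly shod), that every connected direct factor of a silted algebra of $A$ is a tilted algebra. It says nothing about \emph{which} tilted algebras occur, why the types are restricted to $\A_{m_i}$ and $\tA_{\tilde m}$, why at most one factor of type $\tA$ can appear, or why the ranks sum to $n$. None of this is addressed in your proposal, so it has a genuine gap with respect to the stated corollary.

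The missing step is the one the paper supplies in the paragraph preceding the corollary: for any triangulation $\Gamma$ of the annulus $\SURF^{\F_A}(\kk\tA_n)$, the algebra $\grEnd_{\per A}\X(\tGamma^{\circlearrowleft})$ is a (dg) one-cycle gentle algebra, because $\tGamma^{\circlearrowleft}$ is an $\gbullet$-FFAS of an annulus. The combinatorial derived invariants of Avella-Alaminos and Gei\ss\ (\cite[Section 7]{AG2008}), combined with $\gldim\le 2$ and the tilted/strictly-shod dichotomy, then force each connected block to be tilted of type $\A_{m_i}$ or of type $\tA_{\tilde m}$, with at most one block of the latter type (the surface has only one cycle to distribute among the blocks), and the total number of arcs in the $\gbullet$-FFAS accounts for $m_1+\cdots+m_k$ (plus $\tilde m$ when a $\tA$-block occurs) being equal to $n$. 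You need this surface-decomposition or derived-invariant argument, or some substitute for it, to obtain the four forms; without it your write-up establishes only the ``no strictly shod algebras'' ingredient.
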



\section*{acknowledgements} {The authors would like to thank the referees for many helpful comments. Houjun Zhang acknowledges support by the National Natural Science Foundation of China (Grant No. 12301051) and Natural Science Research Start-up Foundation of Recruiting Talents of Nanjing University of Posts and Telecommunications (Grant No. NY222092). Yu-Zhe Liu acknowledges support by the National Natural Science Foundation of China (Grants No. 12171207 and 12401042) and Scientific Research Foundations of Guizhou University (Grants No. [2023]16, [2022]65 and [2022]53)}.



\def\cprime{$'$}
\providecommand{\bysame}{\leavevmode\hbox to3em{\hrulefill}\thinspace}
\providecommand{\MR}{\relax\ifhmode\unskip\space\fi MR }
\providecommand{\MRhref}[2]{%
  \href{http://www.ams.org/mathscinet-getitem?mr=#1}{#2}
}
\providecommand{\href}[2]{#2}

\end{document}